\newtheorem{numbered}{}[section]
\newtheorem{Theorem}[numbered]{Theorem}
\newtheorem{nothing*}[numbered]{}
\newtheorem{remark}[numbered]{Remark}
\newtheorem{question}[numbered]{Question}
\newtheorem{Lemma}[numbered]{Lemma}
\newtheorem{Proposition}[numbered]{Proposition}
\newtheorem{Corollary}[numbered]{Corollary}
\begin{document}

\title{On Certain Projections of $C^*$-Matrix Algebras}
 \author{Ahmed Al-Rawashdeh\\ Department of Mathematical Sciences\\
 UAEU \footnote{United Arab Emirates University,
P.O. Box 17551,  Al-Ain UAE, Fax: +971-3-7671291, Phone:
+971-3-7136388.
}\\
 aalrawashdeh@uaeu.ac.ae}
\date{}
  \maketitle
\begin{abstract} H. Dye defined the projections $P_{i,j}(a)$ of a
$C^*$-matrix algebra by
\begin{eqnarray*}
P_{i,j}(a) &=& (1+aa^*)^{-1}\otimes E_{i,i} + (1+aa^*)^{-1}a \otimes
E_{i,j}\\
 &+& a^*(1+aa^*)^{-1} \otimes  E_{j,i} +
a^*(1+aa^*)^{-1}a\otimes E_{j,j},
\end{eqnarray*}
and he used it to show that in the case of factors not of type
$I_{2n}$, the unitary group determines the algebraic type of that
factor. We study these projections and we show that in
$\mathbb{M}_2(\mathbb{C})$, the set of such projections includes all
the projections. For infinite $C^*$-algebra $A$, having a system of
matrix units, including the Cuntz algebra $\mathcal{O}_n$, we have
$A\simeq \mathbb{M}_n(A)$. M. Leen proved that in a simple, purely
infinite $C^*$-algebra $A$, the $*$-symmetries generate
$\mathcal{U}_0(A)$. We revise and modify Leen's proof to show that
part of such $*$-isometry factors are of the form $1-2P_{i,j}(\omega
),\ \omega \in  \mathcal{U}(A)$. In simple, unital purely infinite
$C^*$-algebras having trivial $K_1$-group, we prove that all
$P_{i,j}(\omega )$ have trivial $K_0$-class. In particular, if $u\in
\mathcal{U}(\mathcal{O}_n)$, then $u$ can be factorized as a product
of $*$-symmetries, where eight of them are of the form
$1-2P_{i,j}(\omega )$. \vspace{.5cm}

\noindent \textbf {Keywords}:  $C^*$-algebras; $K_0$-class.\\
\noindent \textbf{MSC2010}: 46L05; 46L80.

\end{abstract}

\section{Introduction}

Let $A$ be a unital $C^*$-algebra. The set of projections and the
group of unitaries of $A$ are denoted by $\mathcal{P}(A)$ and
$\mathcal{U}(A)$, respectively. Recall that the $C^*$-matrix algebra
over $A$ which is denoted by $\mathbb{M}_n(A)$ is the algebra of all
$n\times n$ matrices $(a_{i,j})$ over $A$, with the usual addition,
scalar multiplication, and multiplication of matrices and the
involution (adjoint) is $(a_{i,j})^*= (a_{j,i}^*)$. As in Dye's
viewpoint of $\mathbb{M}_n(A)$, let $S_n(A)$ denote the direct sum
of $n$ copies of $A$, considered as a left $A$-module. Addition of
$n$-tuples $\bar{x}=(x_1, x_2, \ldots , x_n)$ in $S_n(A)$ is
componentwise and $a\in A$ acts on $\bar{x}$ by $a(\bar{x})=(ax_1,
ax_2, \ldots, ax_n)$. Then $S_n(A)$ is a Hilbert $C^*$-algebra
module, with the inner product defined by
\[<\bar{x},\bar{y}>= \sum_{i=1}^n x_iy_i^*.\]

By an $A$-endomorphism $T$ of $S_n(A)$, we mean an additive mapping
on $S_n(A)$ which commutes with left multiplication:
$a(\bar{x}T)=(a\bar{x})T$. In a familiar way, assign to any $T$ a
uniquely determined matrix $(t_{ij})$ over $A$ ($1\leq i,j\leq n$)
so that $\bar{x}T=(\sum_i x_it_{i1}, \ldots , \sum_i x_it_{in})$.

If $p$ is a projection in $\mathbb{M}_n(A)$, then $p$ is a mapping
on $S_n(A)$ having its range as a sub-module of $S_n(A)$. Then two
projections are orthogonal means their sub-module ranges are so. The
$C^*$-algebra $\mathbb{M}_n(A)$ contains numerous projections. For
each $a\in A$ and each pair of indices $i,j(i\neq j, \ 1\leq i,j
\leq n)$, H. Dye in \cite{Dye} defined the projection $P_{i,j}(a)$
in $\mathbb{M}_n(A)$, whose range consists  of all left multiples of
the vector with $1$ in the $i^{th}$-place, $a$ in the $j^{th}$-place
and zeros elsewhere. As a matrix

\[ P_{i,j}(a) =  \left( \begin{array}{ccccccc}
 0 & \cdots & \cdots & \cdots & \cdots & \cdots & 0 \\
     \multicolumn{7}{c}\dotfill \\
    0 & \cdots &  (1 + aa^{*})^{-1} & \cdots &  (1 + aa^{*})^{-1}a &  \cdots & 0 \\
       \multicolumn{7}{c}\dotfill \\
    0 & \cdots &  a^{*}(1 + aa^{*})^{-1} & \cdots &  a^{*}(1 + aa^{*})^{-1}a & \cdots & 0 \\
     \multicolumn{7}{c}\dotfill  \\
   0 & \cdots & \cdots & \cdots & \cdots & \cdots & 0
\end{array} \right)  \]

Recall that (see \cite{Dye}, p.74) a system of matrix units of a
unital $C^*$-algebra $A$ is a subset
 $\{e_{i,j}^{r}\}, 1\leq i,j \leq n$ and $1 \leq r\leq  m$ of $A$, such that
\[ e_{i,j}^{r} e_{j,k}^{r} = e_{i,k}^r,\ e_{i,j}^{r} e_{k,l}^{s}= 0\: \text{if}\:
r\neq\: s\: \text{or}\: j \neq k,\: (e_{i,j}^{r})^{*} =
e_{j,i}^{r},\: \sum_{i,r}^{n,m}e_{i,i}^{r}=1 \] and for every $i$,
$e_{i,i} \in \mathcal{P}(A)$. For the $C^*$-complex matrix algebra
$\mathbb{M}_n(\mathbb{C})$, let $\{E_{i,j}\}_{i,j=1}^{n}$ denote the
standard system of matrix units of the algebra, that is $E_{i,j}$ is
the $n\times n$ matrix over $\mathbb{C}$ with $1$ at the place
$i\times j$ and zeros elsewhere. It is also known that
$\mathbb{M}_n(A)$ is $*$-isomorphic to $A\otimes
\mathbb{M}_n(\mathbb{C})$ (see \cite{WO}). We will see that having a
system of matrix units is a necessary condition in order that a
$C^*$-algebra $A$ is $*$-isomorphic  to a $C^*$-matrix algebra
$\mathbb{M}_n(B)$. Using the notion of a system of matrix units, we
write
\begin{eqnarray*}
P_{i,j}(a) &=& (1+aa^*)^{-1}\otimes E_{i,i} + (1+aa^*)^{-1}a \otimes
E_{i,j}\\
 &+& a^*(1+aa^*)^{-1} \otimes  E_{j,i} +
a^*(1+aa^*)^{-1}a\otimes E_{j,j} \in \mathcal{P}(\mathbb{M}_n(A)).
\end{eqnarray*}
 If $a =0$, then $P_{i,j}(a)$ is the $i^{th}$ diagonal
matrix unit of $\mathbb{M}_n(A)$, which is $1\otimes E_{i,i}$, or
simply $E_i$.

\noindent Also in \cite{St}, M. Stone called the projection
$P_{i,j}(a)$ the
characteristics matrix of $a$.\\

H. Dye used these projections as a main tool to prove that an
isomorphism between the discrete unitary groups of von Neumann
factors not of type $I_n$, is implemented by a $*$-isomorphism
between the factors themselves [\cite{Dye}, Theorem 2]. Indeed, let
us recall main parts of his proof.
 Let $A$ and $B$ be two unital
$C^*$-algebras and let $\varphi : \mathcal{U}(A)\rightarrow
\mathcal{U}(B)$ be an isomorphism. As $\varphi$ preserves
self-adjoint unitaries, it induces a natural bijection
$\theta_{\varphi}:\mathcal{P}(A)\rightarrow \mathcal{P}(B)$ between
the sets of projections of $A$ and $B$ given by
\[ 1-2\theta_{\varphi}(p) = \varphi(1-2p),\;p\in \mathcal{P}(A).\]
This mapping is called a projection orthoisomorphism, if it
preserves orthogonality, i.e. $pq=0$ iff $\theta(p)\theta(q)=0$.

Now, let $\theta$ be an orthoisomorphism from
$\mathcal{P}(\mathbb{M}_n(A))$ onto $\mathcal{P}(\mathbb{M}_n(B))$.
In [\cite{Dye}, Lemma 8] when $A$ and $B$ are von Neumann algebras,
Dye proved that for any unitary $u\in \mathcal{U}(A)$,
$\theta(P_{i,j}(u))=P_{i,j}(v)$, for some unitary $v\in
\mathcal{U}(B)$. A similar result is proved in the case of simple,
unital $C^*$-algebras by the author in \cite{Ahmed}. Afterwards, Dye
in [\cite{Dye}, Lemma 6], proved that there exists a $*$-isomorphism
(or $*$-antiisomorphism) from $\mathbb{M}_n(A)$ onto
$\mathbb{M}_n(B)$ which coincides with $\theta$ on the projections
$P_{i,j}(a)$. In fact, he proved that $\theta$ induces the
$*$-isomorphism $\phi$ from $A$ onto $B$ defined by the relation
$P_{i,j}(a)=P_{i,j}(\phi (a))$.\\

 In this paper, we study the projections $P_{i,j}(a)$ of a
$C^*$-matrix algebra $\mathbb{M}_n(A)$, for some $C^*$-algebra $A$,
and we deduce main results concerning such projections.

The paper is organized as follows: In Section 2, we show that every
projection in $\mathbb{M}_2(\mathbb{C})$ is of the form
$P_{1,2}(a)$, for $a\in \mathbb{C}$. In Section 3, we show that some
infinite $C^*$-algebra $A$ is isomorphic to its matrix algebra
$\mathbb{M}_n(A)$, such as the Cuntz algebra $\mathcal{O}_n$, so the
projections $P_{i,j}(a)$ can be considered as projections of $A$.

 In a simple, unital purely infinite $C^*$-algebra $A$, M. Leen
proved that self-adjoint unitaries (also called $*$-symmetries, or
involutions) generate the connected component $\mathcal{U}_0(A)$ of
the unitary group $\mathcal{U}(A)$. Indeed, any unitary can be
written as a product of eleven $*$-symmetries. In Section 4, we
modify Leen's proof, and we write these $*$-symmetry factors
explicitly. By revising his proof and fixing some arbitraries using
a given system of matrix units, we show that eight of these
$*$-symmetry factors are in fact of the form $1-2P_{i,j}(\omega),\
\omega \in \mathcal{U}(A)$.\\

 Finally, in Section 5, we compute the $K_0$-class of such certain
projections, and we prove that in simple, unital purely infinite
$C^*$-algebras (assuming $K_1=0$), all projections of the form
$P_{i,j}(u),\ u\in \mathcal{U}(A)$ have trivial $K_0$-class. As a
good application for $\mathcal{O}_n$, we have that every unitary can
be written as a product of eleven $*$-symmetries (self-adjoint
unitaries, also called involutions), where eight of them are of the
form $1-2P_{i,j}(\omega ), \ \omega \in \mathcal{U}(\mathcal{O}_n)$.
Hence using \cite{Ahmed2} (Lemma 2.1), all such involutions of the
form $1-2P_{i,j}(\omega )$ are indeed conjugate, as group elements
in $\mathcal{U}(\mathcal{O}_n)$.

\section{The $2\times 2$-Complex Algebra Case}

Let $A$ be a unital $C^*$-algebra, and let $\mathcal{P}_{i,j}^n(A)$
denote the family of all projections in $\mathbb{M}_n(A)$ of the
form $P_{i,j}(a),\ 1\leq i,j \leq n,\ a\in A$. Also, let
$\mathcal{U}_{i,j}^n(A)$ denote the set of all self-adjoint
unitaries in $\mathbb{M}_n(A)$ of the form $1-2P_{i,j}(a),\ 1\leq
i,j \leq n,\ a\in A$. Notice that $\mathcal{P}_{i,j}^n(A)$ contains
non-trivial projections. In this small section, we show that in the
case of $\mathbb{M}_2(\mathbb{C})$, the set
$\mathcal{P}_{i,j}^2(\mathbb{C})$ includes all the non-trivial
projections $\mathcal{P}(\mathbb{M}_2(\mathbb{C}))$, i.e. every
non-trivial projection is of the form $P_{i,j}(a)$, for some complex
number $a$.

\begin{Proposition}
If $p\in \mathcal{P}(\mathbb{M}_2(\mathbb{C}))\backslash\{0,1\}$,
then $p\in \mathcal{P}_{i,j}^2(\mathbb{C})$.
\end{Proposition}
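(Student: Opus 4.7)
The plan is to identify non-trivial projections in $\mathbb{M}_2(\mathbb{C})$ with rank-one orthogonal projections, parametrise them by unit vectors, and match each one to the explicit matrix form of some $P_{i,j}(a)$.

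First I would observe that a projection $p \in \mathcal{P}(\mathbb{M}_2(\mathbb{C}))\setminus\{0,1\}$ must have trace $1$ (as projections in $\mathbb{M}_2(\mathbb{C})$ have integer trace in $\{0,1,2\}$), hence rank $1$. So $p = vv^*$ for some unit column vector $v = (\alpha,\beta)^T \in \mathbb{C}^2$, yielding
\[
p = \begin{pmatrix} |\alpha|^2 & \alpha\bar{\beta} \\ \bar{\alpha}\beta & |\beta|^2 \end{pmatrix}, \qquad |\alpha|^2 + |\beta|^2 = 1.
\]
Next I would write out $P_{1,2}(a)$ for scalar $a \in \mathbb{C}$: since $aa^* = |a|^2$, the defining formula displayed in the excerpt collapses to
\[
P_{1,2}(a) = \frac{1}{1+|a|^2}\begin{pmatrix} 1 & a \\ \bar{a} & |a|^2 \end{pmatrix}.
\]

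The remaining task is to solve for $a$ matching $p$. If $\alpha \neq 0$, I would set $a := \bar{\beta}/\bar{\alpha}$; then $1 + |a|^2 = 1/|\alpha|^2$, and a direct entry-by-entry comparison gives $p = P_{1,2}(a)$. If instead $\alpha = 0$, then $|\beta|=1$ forces $p = E_{2,2}$, and I would invoke the observation in the excerpt that $P_{i,j}(0) = E_{i,i}$, so that $p = P_{2,1}(0) \in \mathcal{P}_{i,j}^2(\mathbb{C})$.

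The main (mild) obstacle is recognising that the single family $\{P_{1,2}(a): a \in \mathbb{C}\}$ does \emph{not} exhaust all non-trivial projections: its $(1,1)$-entry is $(1+|a|^2)^{-1} > 0$ for every $a$, so it can never hit $E_{2,2}$. Allowing both index pairs $(i,j) \in \{(1,2),(2,1)\}$ resolves this degenerate case, and with that dichotomy in place the remainder of the argument is a routine entry-matching computation.
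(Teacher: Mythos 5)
Your proof is correct and follows essentially the same route as the paper: both arguments reduce to a direct entry-by-entry match of a rank-one projection against the explicit $2\times 2$ matrix of $P_{1,2}(a)$, with the degenerate case $E_{2,2}$ handled by the other index pair via $P_{2,1}(0)=E_{2,2}$. Your parametrisation $p=vv^*$ with $a=\bar{\beta}/\bar{\alpha}$ is a little cleaner than the paper's, which solves the quadratic $a_{11}=a_{11}^2+|b|^2$ to obtain $a=2b/(1\pm\sqrt{1-4|b|^2})$, but the underlying computation is the same.
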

\begin{proof}
Let $p=\left( \begin{array}{cc}
    a & b
     \\ c & d
\end{array} \right ) $ be a non-trivial projection in
$\mathcal{P}(\mathbb{M}_2(\mathbb{C}))$. Then $a$ and $d$ are real
numbers. If $b=0$, then $p$ is either the diagonal matrix unit
$E_{1,1}$ or $E_{2,2}$. Otherwise, we have $a+b=1, a=a^2+ |b|^2$ and
$d=d^2+|b|^2$, therefore  $|b|^2\leq \frac{1}{4}$. By strightforward
computations, one can deduce that $p$ is of the form
$$ P_{1,2}\left (\frac{2b}{1+\sqrt{1-4|b|^2}} \right ), \  \text{or}\ \ \ P_{1,2}\left (\frac{2b}{1-\sqrt{1-4|b|^2}}\right ). $$
\end{proof}

\begin{remark} The projections in $\mathcal{P}_{i,j}^n(A)$ are all
of rank one by definition, this implies that in the case of
$\mathbb{M}_3(\mathbb{C})$, the set
$\mathcal{P}_{i,j}^3(\mathbb{C})$ does not cover all the non-trivial
projections. Indeed, there are projections in
$\mathcal{P}(\mathbb{M}_3(\mathbb{C}))$ of rank one which do not
belong to $\mathcal{P}_{i,j}^3(\mathbb{C})$, since every projection
in this latest family projects into a subspace of $\mathbb{C}^3$
which lies entirely in one coordinate plan.
\end{remark}
\section{Some Results for infinite $C^*$-algebras}

Let $A$ be a unital $C^*$-algebra having a system of matrix units
$\{e_{i,j}\}_{i,j=1}^n$, for some $n\geq 3$. Recall that
$e_{1,1}Ae_{1,1}$ is a $C^*$-algebra (corner algebra) which has
$e_{1,1}$ as a unit. This system of matrix units implements a
$*$-isomorphism between $A$ and $\mathbb{M}_n(e_{1,1}Ae_{1,1})$.
Indeed, let us define the mapping \[
\eta_{1}:\mathbb{M}_{n}(e_{1,1}Ae_{1,1})\rightarrow
 A \]by
 \[\eta_1((a_{i,j})^{n})=
\sum_{i,j=1}^{n}e_{i,1}a_{i,j}e_{1,j}.\] Moreover if $e_{1,1}$ is
equivalent to $1$ (i.e. $A$ is assumed to be infinite
$C^*$-algebra), then there exists a partial isometry $v$ of $A$ such
that $v^*v=e_{1,1}$ and $vv^*=1$, and this defines the
$*$-isomorphism  $\Delta_v: A \rightarrow e_{1,1}Ae_{1,1}$ by
$\Delta_v(x)= v^*xv$. The isomorphism $\Delta_v$ can be used to
decompose a projection as a sum of orthogonal equivalent
projections.
\begin{Proposition}\label{decom}
Let $A$ be a unital $C^*$-algebra having a system of matrix units
$\{e_{i,j}\}_{i=1}^n$. If $p$ is equivalent to the unity, then $p$
can be written as a sum of orthogonal equivalent subprojections.
\end{Proposition}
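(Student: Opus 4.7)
The plan is to transport the canonical decomposition of $1$ furnished by the matrix units along the equivalence $p \sim 1$. First, since $p$ is Murray--von Neumann equivalent to the unity, I fix a partial isometry $w \in A$ with $w^*w = 1$ and $ww^* = p$; so $w$ is in fact an isometry whose range projection is $p$. The system of matrix units already gives an orthogonal decomposition $1 = \sum_{i=1}^n e_{i,i}$, and the off-diagonal units witness equivalence between the diagonal ones: $e_{i,j}^* e_{i,j} = e_{j,j}$ and $e_{i,j} e_{i,j}^* = e_{i,i}$, so $e_{i,i} \sim e_{j,j}$ for every pair $i,j$.

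Next, I set $p_i := w\, e_{i,i}\, w^*$ for $1 \le i \le n$. Using $w^*w = 1$, a short verification shows that each $p_i$ is a projection, that $p_i p_j = w\, e_{i,i}\, (w^*w)\, e_{j,j}\, w^* = w\, e_{i,i} e_{j,j}\, w^* = 0$ whenever $i \ne j$, and that
\[ \sum_{i=1}^n p_i \;=\; w\Bigl(\sum_{i=1}^n e_{i,i}\Bigr) w^* \;=\; w\,w^* \;=\; p. \]
Moreover, the partial isometry $u_i := w\, e_{i,i}$ satisfies $u_i^* u_i = e_{i,i}$ and $u_i u_i^* = p_i$, which gives $p_i \sim e_{i,i}$. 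Combining this with the equivalences $e_{i,i} \sim e_{j,j}$ already noted, all the $p_i$ are mutually equivalent, completing the desired decomposition of $p$ into $n$ pairwise orthogonal equivalent subprojections.

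There is essentially no hard step; the only subtlety worth flagging is the choice of orientation of the equivalence between $p$ and $1$. One must pick the partial isometry so that $w^*w = 1$ (rather than only $w^*w = p$), since it is precisely the relation $w^*w = 1$ that collapses $e_{i,i}(w^*w)e_{j,j}$ to $e_{i,i}e_{j,j}$ and thereby forces the orthogonality of the $p_i$. This choice is always available because Murray--von Neumann equivalence is symmetric, and the remaining verifications are purely mechanical manipulations with the matrix unit relations.
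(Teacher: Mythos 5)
Your proof is correct and follows essentially the same route as the paper: both transport the orthogonal decomposition $1=\sum_{i=1}^n e_{i,i}$ through the partial isometry implementing $p\sim 1$ (the paper writes $p_i=v^*e_{i,i}v$ with $vv^*=1$, $v^*v=p$, which is your $p_i=we_{i,i}w^*$ with $w=v^*$). You simply spell out the orthogonality, summation, and mutual-equivalence checks that the paper leaves implicit.
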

\begin{proof} As $p$ equivalent to 1, we consider the isomorphism
$\Delta_v$, then apply it to the equality $1= \sum_{i=1}^n e_{i,i}$,
to get $p=\sum_{i=1}^n v^*e_{i,i}v$. Then $p_i=v^*e_{i,i}v$, for all
$1\leq i \leq n$, are equivalent subprojections of $p$.
\end{proof}

Recall that, for two unital $C^*$-algebras $A$ and $B$, if $\alpha
:A\rightarrow B$ is a $*$-isomorphism, then $\alpha$ induces the
$*$-isomorphism $\widehat{\alpha}: \mathbb{M}_n(A) \rightarrow
\mathbb{M}_n(B)$, which is defined by $(a_{i,j})\mapsto
(\alpha(a_{i,j}))$. Then we have the following result.

\begin{Proposition}\label{matrixiso}
Let $A$ be an infinite unital $C^*$-algebra having a system of
matrix units $\{e_{i,j}\}_{i,j=1}^n$. If $e_{1,1}$ is equivalent to
$1$, then $\mathbb{M}_n(A)$ is $*$-isomorphic to $A$.
\end{Proposition}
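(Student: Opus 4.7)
The plan is to combine the two ingredients already introduced in the paper: the map $\eta_1 : \mathbb{M}_n(e_{1,1}Ae_{1,1}) \to A$ built from the system of matrix units, and the corner isomorphism $\Delta_v : A \to e_{1,1}Ae_{1,1}$ coming from the partial isometry $v$ witnessing $e_{1,1}\sim 1$. The desired isomorphism $\mathbb{M}_n(A) \to A$ will be the composition
\[
\mathbb{M}_n(A) \xrightarrow{\widehat{\Delta_v}} \mathbb{M}_n(e_{1,1}Ae_{1,1}) \xrightarrow{\eta_1} A,
\]
where $\widehat{\Delta_v}$ denotes the entrywise $*$-isomorphism induced by $\Delta_v$ recalled just before the statement. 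Since $\widehat{\Delta_v}$ is a $*$-isomorphism by construction, everything reduces to showing that $\eta_1$ is a $*$-isomorphism.

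First I would verify that $\eta_1$ is well defined, i.e.\ that the formula $\eta_1((a_{i,j})) = \sum_{i,j} e_{i,1} a_{i,j} e_{1,j}$ indeed produces an element of $A$ (which is immediate since each $a_{i,j} \in e_{1,1}Ae_{1,1}$ and the matrix units lie in $A$). Linearity is clear, and I would check the multiplicative property by a direct computation: using the matrix unit relations $e_{1,j} e_{k,1} = \delta_{jk} e_{1,1}$ together with $a_{i,j}, b_{k,l} \in e_{1,1}Ae_{1,1}$, the double sum collapses in exactly the way that reproduces matrix multiplication. The $*$-compatibility follows from $(e_{i,1} a_{i,j} e_{1,j})^* = e_{j,1} a_{i,j}^* e_{1,i}$, which matches the $(j,i)$-entry of the matrix adjoint.

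For injectivity, if $\eta_1((a_{i,j})) = 0$, then multiplying on the left by $e_{1,k}$ and on the right by $e_{l,1}$, and using the matrix unit relations together with $a_{k,l} \in e_{1,1}Ae_{1,1}$, isolates $a_{k,l} = 0$ for every $k,l$. For surjectivity, given $x \in A$, I would write
\[
x = \Bigl(\sum_{i} e_{i,i}\Bigr)\, x\, \Bigl(\sum_{j} e_{j,j}\Bigr) = \sum_{i,j} e_{i,1}\bigl(e_{1,i} x e_{j,1}\bigr) e_{1,j},
\]
and observe that each $e_{1,i} x e_{j,1}$ lies in $e_{1,1}Ae_{1,1}$; hence $x = \eta_1\bigl((e_{1,i} x e_{j,1})_{i,j}\bigr)$. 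This gives $\eta_1$ as a $*$-isomorphism.

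The main (essentially only) potential obstacle is the bookkeeping in checking multiplicativity of $\eta_1$ with respect to the matrix product and ensuring that the corner condition $a_{i,j} \in e_{1,1}Ae_{1,1}$ is used correctly to kill the cross terms; once this is verified, composing with $\widehat{\Delta_v}$ requires no further work and yields $\mathbb{M}_n(A)\cong A$.
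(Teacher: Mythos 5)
Your proposal is correct and follows exactly the paper's route: the paper also defines the isomorphism as $\eta = \eta_1 \circ \widehat{\Delta_v}$ and simply records the explicit formulas for $\eta$ and $\eta^{-1}$ (your surjectivity computation is precisely the paper's formula $\eta^{-1}(x) = (ve_{1,i}xe_{j,1}v^*)_{i,j}$, without the $v$'s since you work at the level of $\eta_1$). You merely supply the routine verifications that the paper leaves implicit.
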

\begin{proof} Let $\Delta_v: A \rightarrow e_{1,1}Ae_{1,1}$ and $
\eta_{1}:\mathbb{M}_{n}(e_{1,1}Ae_{1,1})\rightarrow A$ be defined as
above. Then the mapping $\eta =\eta_1 \circ \widehat{\Delta_v}$ is a
$*$-isomorphism from $\mathbb{M}_n(A)$ onto $A$. Moreover,
\[\eta (a_{i,j})^n = \sum_{i,j}^n e_{i,1}v^*a_{i,j}ve_{1,j}, \ \text{and}\]
\[\eta^{-1}(x)= (ve_{1,i}xe_{j,1}v^*)_{i,j}^n.\]
\end{proof}

As a main example of purely infinite $C^*$-algebras, let us recall
the Cuntz algebra $\mathcal{O}_{n}$; $n\geq 2$, is the universal
$C^{*}$-algebra  which is generated by isometries
$s_{1},s_{2},\ldots ,s_{n}$, such that
 $\sum_{i=1}^{n}s_{i}s_{i}^{*} = 1$ with $s_{i}^{*}s_{j}=0$, when $i\neq
 j$ and $s_i^*s_i=1$ (for more details, see \cite{Cu2}, [\cite{Da}, p.149]). Let
  \begin{equation}\label{sysmuCuntz} e_{i,j}=s_is_j^*,\ \ \ \ \ 1\leq i,j\leq
  n\ .
 \end{equation}
  Then $\{e_{i,j}\}_{i,j=1}^n$ forms a
 system of matrix units for $\mathcal{O}_n$. As $s_1^*$ partial
 isometry between $e_{1,1}$ and the unity, then Proposition
 \ref{matrixiso} shows that the mapping
 \begin{equation}\label{etamap}
 \eta : \mathbb{M}_n(\mathcal{O}_n)\rightarrow \mathcal{O}_n,\ \
 (a_{i,j})_{i,j}\mapsto \sum_{i,j=1}^n s_ia_{i,j}s_j^*
 \end{equation}
 is a $*$-isomorphism. Indeed, for $x\in \mathcal{O}_n$,
 $\eta^{-1}(x)= (s_i^*xs_j)_{i,j}\in \mathbb{M}_n(\mathcal{O}_n)$.

 \noindent Therefore, we have proved the following result, which is
 in fact known, but for sake of completeness:
 \begin{Proposition}\label{matrixcuntz}
 The Cuntz algebra $\mathcal{O}_n$ is isomorphic to the
 $C^*$-algebra  $\mathbb{M}_n(\mathcal{O}_n)$.
 \end{Proposition}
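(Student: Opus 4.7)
The plan is to recognize that this proposition is an essentially immediate application of Proposition \ref{matrixiso} to the Cuntz algebra $\mathcal{O}_n$, so the work amounts to checking the two hypotheses of that proposition: existence of a system of matrix units of size $n$, and equivalence of the first diagonal matrix unit to the identity.

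First I would verify that the elements $e_{i,j}=s_is_j^*$, for $1\le i,j\le n$, form a system of matrix units. Using the Cuntz relations $s_i^*s_j=\delta_{i,j}\,1$ and $\sum_i s_is_i^*=1$, a direct calculation gives $e_{i,j}e_{k,l}=s_is_j^*s_ks_l^*=\delta_{j,k}\,s_is_l^*=\delta_{j,k}\,e_{i,l}$, and $(e_{i,j})^*=(s_is_j^*)^*=s_js_i^*=e_{j,i}$; the relation $\sum_{i=1}^n e_{i,i}=\sum_i s_is_i^*=1$ is built into the defining relations of $\mathcal{O}_n$. Moreover, each $e_{i,i}=s_is_i^*$ is a projection.

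Next I would exhibit a partial isometry implementing the Murray--von Neumann equivalence $e_{1,1}\sim 1$. The isometry $s_1$ itself does the job: the element $v=s_1^*$ satisfies $v^*v=s_1s_1^*=e_{1,1}$ and $vv^*=s_1^*s_1=1$, matching exactly the setup in Proposition \ref{matrixiso}. Having verified both hypotheses, Proposition \ref{matrixiso} yields the $*$-isomorphism $\eta=\eta_1\circ\widehat{\Delta_v}:\mathbb{M}_n(\mathcal{O}_n)\to \mathcal{O}_n$, and a substitution of $v=s_1^*$ into the explicit formulas for $\eta_1$ and $\Delta_v$ recovers precisely the map $(a_{i,j})\mapsto \sum_{i,j}s_ia_{i,j}s_j^*$ displayed in \eqref{etamap}, with inverse $x\mapsto (s_i^*xs_j)_{i,j}$.

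There is no genuine obstacle here; the only point that deserves a brief sanity check is that the two-step composition $\eta_1\circ\widehat{\Delta_v}$ coincides with the closed-form map in \eqref{etamap}, which is a routine substitution using $e_{i,1}=s_is_1^*$ and $v=s_1^*$, so that $e_{i,1}v^*=s_is_1^*s_1=s_i$ and symmetrically $ve_{1,j}=s_j^*$.
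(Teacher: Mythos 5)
Your proposal follows exactly the paper's route: it takes $e_{i,j}=s_is_j^*$ as the system of matrix units, uses $v=s_1^*$ as the partial isometry with $v^*v=e_{1,1}$ and $vv^*=1$, and invokes Proposition \ref{matrixiso} to obtain the isomorphism \eqref{etamap}. The verification of the matrix-unit relations and of the identity $\eta_1\circ\widehat{\Delta_v}=\eta$ is correct, so this is a sound (and slightly more detailed) version of the paper's own argument.
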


 Then for $a\in \mathcal{O}_n$, $P_{i,j}(a)$
 are considered as projections of $\mathcal{O}_n$ by applying the
 mapping $\eta$. Therefore,
 \[P_{i,j}(a) = s_i(1+aa^*)^{-1}s_i^* +
s_i(1+aa^*)^{-1}as_j^* +
 s_ja^*(1+aa^*)^{-1}s_i^* + s_ja^*(1+aa^*)^{-1}as_j^*.\]

\section{Unitary Factors in Purely Infinite $C^*$-Algebras}

Recall that in a unital $C^*$-algebra $A$, every self-adjoint
unitary $u$ ($*$-symmetry, or also called an involution) can be
written as $u=1-2p$, for some projection $p\in \mathcal{P}(A)$, let
us say " the self-adjoint unitary $u$ is associated to the
projection $p$". In this section, we assume that $A$ is purely
infinite simple $C^*$-algebra, and we study the factorizations of
unitaries of $A$. Recall that in \cite{Leen}, M. Leen proved that
every unitary in the connected component of the unity
$\mathcal{U}_0(A)$ is generated by $*$-symmetries.

 Consider a system of matrix units $\{e_{i,j}\}_{i,j=1}^n$ of $A$, with $e_{1,1}\sim 1$.
 Let us recall the $*$-isomorphisms \(
\eta_{1}:\mathbb{M}_{n}(e_{1,1}Ae_{1,1})\rightarrow
 A \), and $\eta =\eta_1 \circ \widehat{\Delta_v}$ from $\mathbb{M}_n(A)$ onto
 $A$. We modify Leens' proof of Theorem 3.5 in \cite{Leen} by revising his arguments, and then we prove the following main
theorem, which shows that every unitary of $A$ can be factorized as
a product of eleven self-adjoint unitaries ($*$-symmetries)
moreover, where eight of such factors are associated to the
projections $P_{i,j}(\mu)$, for some $\mu\in \mathcal{U}(A)$.
\begin{Theorem}\label{unitaryfactor}
Let $A$ be a simple, unital purely infinite $C^*$-algebra, such that
$K_1(A)=0$, and let $\{e_{i,j}\}_{i,j=1}^n$ be a system of matrix
units of $A$, with $e_{1,1}\sim 1$. Then every unitary $a$ of $A$
can be written as
\[a= z_1(\prod_{k=1}^4v_k)z_2z_3,\]
where $z_1,z_2,z_3$ are some self-adjoint unitaries and the
 $v_i's$ are the self-adjoint unitaries of $A$ defined by:
 \begin{eqnarray*}
 v_1 &=& [1-2\eta(P_{1,2}(-\alpha))][1-2\eta(P_{1,2}(-1))]\\
v_2 &=& [1-2\eta(P_{1,3}(-\alpha))][1-2\eta(P_{1,3}(-1))]\\
v_3 &=& [1-2\eta(P_{1,2}(-\gamma))][1-2\eta(P_{1,2}(-1))]\\
v_4 &=& [1-2\eta(P_{1,3}(-\gamma))][1-2\eta(P_{1,3}(-1))],
 \end{eqnarray*}
for some $\alpha,\gamma \in \mathcal{U}(A)$.
\end{Theorem}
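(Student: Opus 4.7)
The plan is to track, stage by stage, Leen's three-stage reduction in \cite{Leen} of unitaries into $*$-symmetries for a simple, unital purely infinite $C^*$-algebra, pinning down every auxiliary choice (isometries, intertwining projections) by reading it off from the given system of matrix units $\{e_{i,j}\}_{i,j=1}^n$ and the partial isometry $v$ underlying Proposition \ref{matrixiso}. The hypothesis $K_1(A)=0$ is what places $a$ in $\mathcal{U}_0(A)$, where Leen's theorem applies.

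First I would transport $a$ back to $\mathbb{M}_n(A)$ via $\eta^{-1}$, where the block structure becomes visible. Leen's opening and closing moves cut $a$ down to a unitary that is already block-diagonal with respect to $e_{1,1}+e_{2,2}+e_{3,3}$---the three mutually equivalent orthogonal subprojections produced by Proposition \ref{decom}. The $*$-symmetries introduced at these two ends are arbitrary and are recorded as $z_1$ on the left and $z_2 z_3$ on the right in the statement, with no constraint on their form beyond being self-adjoint unitaries.

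The substance of the proof is the middle four factors $v_1,\dots,v_4$. The key computational lemma I would establish is that for any $u\in \mathcal{U}(A)$ and any distinct indices $i,j$,
\[
\bigl[1-2P_{i,j}(-u)\bigr]\bigl[1-2P_{i,j}(-1)\bigr] \in \mathbb{M}_n(A)
\]
is a rotation supported on the corner $(E_i+E_j)\mathbb{M}_n(A)(E_i+E_j)$: each factor $1-2P_{i,j}(c)$ is a reflection of the rank-two free $A$-submodule sitting in coordinates $i$ and $j$ (through the line orthogonal to the generator $(\ldots,1,\ldots,c,\ldots)$ of the range of $P_{i,j}(c)$), and the product of two such reflections is a rotation whose off-diagonal entries can be read off directly in terms of $u$. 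Transporting this rotation to $A$ via $\eta$ produces exactly the $v_k$ displayed in the statement.

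The main obstacle is verifying that Leen's construction really needs only the index pairs $(1,2)$ and $(1,3)$, and only two unitary parameters $\alpha$ and $\gamma$, rather than a longer list. This is guided by Proposition \ref{decom}: the splitting $1=e_{1,1}+e_{2,2}+e_{3,3}$ naturally privileges the pairs $(1,2)$ and $(1,3)$, and Leen's iteration first installs a mixing by $\alpha$ across these two pairs (producing $v_1$ and $v_2$) and then reverses it with a modified parameter $\gamma$ (producing $v_3$ and $v_4$). Once this indexing has been matched to Leen's scheme and the four rotations have been rewritten in $P_{i,j}$-form, the remaining three $*$-symmetry factors in Leen's eleven collapse into $z_1,z_2,z_3$ and the decomposition follows.
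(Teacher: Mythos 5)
Your overall route is the same as the paper's: revise Leen's factorization, pin down his arbitrary choices using the given matrix units and the partial isometry $v$, and recognize the middle factors as products of pairs of reflections. Your key computational observation is also the paper's: for a unitary $u$ one has $1-2P_{i,j}(-u)=u\otimes E_{i,j}+u^*\otimes E_{j,i}+\sum_{k\neq i,j}E_{k,k}$, so $[1-2P_{i,j}(-u)][1-2P_{i,j}(-1)]$ is the ``rotation'' $u\otimes E_{i,i}+u^*\otimes E_{j,j}+\sum_{k\neq i,j}E_{k,k}$; this is exactly the paper's identity $\beta_1\beta_2=\mathrm{diag}(\alpha,\alpha^{-1},p)$ with $\tfrac{I-\beta_1}{2}=P_{1,2}(-\alpha)$ and $\tfrac{I-\beta_2}{2}=P_{1,2}(-p)$.

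There is, however, a genuine gap at the step you describe as ``the main obstacle,'' and your proposed resolution of it is not what actually happens. The reason only four rotations governed by two unitaries $\alpha,\gamma$ suffice is not that Leen ``installs a mixing by $\alpha$ and then reverses it with $\gamma$.'' It is Leen's telescoping (Eilenberg-swindle) factorization: after Lemma \ref{unitarymatrixdecomp} reduces $a$ to $\mathrm{diag}(x,1-\rho)$ with $\rho$ a \emph{proper subprojection} of $e_{1,1}$ (not a block-diagonal form over $e_{1,1}+e_{2,2}+e_{3,3}$, as you state), one writes $\mathrm{diag}(x,1,1,\ldots)=b_1b_2b_3b_4$ in $\rho A\rho\otimes\mathbb{K}$, where each $b_i$ is an infinite diagonal of fractional powers $x^{1/2},x^{1/4},x^{1/8},\ldots$ arranged so that everything cancels except one $x$. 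Pure infiniteness is then used to build two embeddings $\varphi_1,\varphi_2$ of $\rho A\rho\otimes\mathbb{K}$ into $r_1Ar_1$ (from infinitely many orthogonal equivalent subprojections of $e_{1,1},e_{2,2},e_{3,3}$), under which each $b_i$ compresses, via $\zeta$, to a single $3\times3$ rotation $\mathrm{diag}(\alpha,\alpha^{-1},p)$, $\mathrm{diag}(\alpha,p,\alpha^{-1})$, or the same with $\gamma$; the unitaries $\alpha$ and $\gamma$ arise as the images of these infinite diagonals under the two different embeddings, and there is no reversal. Without reproducing this mechanism your argument does not establish that two parameters and the index pairs $(1,2)$, $(1,3)$ suffice, which is the entire content of the theorem beyond Leen's Theorem \ref{Leen}.
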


Consequently, as the Cuntz algebra is simple, unital purely infinite
$C^*$-algebra, and $K_1(\mathcal{O}_n)=0$, see \cite{Cu1}, and using
Proposition \ref{matrixcuntz}, we have the following result.
\begin{Corollary}
If $u\in \mathcal{U}(\mathcal{O}_n)$, then
 \begin{eqnarray*}
u &=& z_1(1-2P_{1,2}(-\alpha))(1-2P_{1,2}(-1))(1-2P_{1,3}(-\alpha))(1-2P_{1,3}(-1)) \\
&.&
(1-2P_{1,2}(-\gamma))(1-2P_{1,2}(-1))(1-2P_{1,3}(-\gamma))(1-2P_{1,3}(-1))z_2z_3,
\end{eqnarray*}
for some self-adjoint unitaries $z_1,z_2,z_3$ and $\alpha, \gamma
\in \mathcal{U}(\mathcal{O}_n)$.
\end{Corollary}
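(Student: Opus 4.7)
The plan is to deduce the Corollary as a direct specialization of Theorem~\ref{unitaryfactor} to $A=\mathcal{O}_n$, so essentially all the work is in checking the hypotheses and then translating the conclusion across the isomorphism $\eta$ of Proposition~\ref{matrixcuntz}.

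First I would verify that $\mathcal{O}_n$ meets each assumption of Theorem~\ref{unitaryfactor}. By Cuntz's original results \cite{Cu2}, $\mathcal{O}_n$ is simple, unital, and purely infinite, and by \cite{Cu1}, $K_1(\mathcal{O}_n)=0$. The elements $e_{i,j}=s_is_j^*$ from (\ref{sysmuCuntz}) form a system of matrix units, and the isometry $s_1$ satisfies $s_1^*s_1=1$ and $s_1s_1^*=e_{1,1}$, so that $e_{1,1}\sim 1$ with $s_1^*$ as the implementing partial isometry. In particular the hypothesis $e_{1,1}\sim 1$ is met, so Theorem~\ref{unitaryfactor} is applicable to $\mathcal{O}_n$ with this choice of matrix units.

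Applying the theorem to a given $u\in \mathcal{U}(\mathcal{O}_n)$ produces self-adjoint unitaries $z_1,z_2,z_3$ and unitaries $\alpha,\gamma\in\mathcal{U}(\mathcal{O}_n)$ such that $u=z_1(\prod_{k=1}^4 v_k)z_2z_3$, with the $v_k$'s expressed via $\eta(P_{1,j}(\cdot))$. As in the displayed formula at the end of Section~3, the element $\eta(P_{i,j}(a))$ is exactly the projection of $\mathcal{O}_n$ denoted $P_{i,j}(a)$ in the Corollary. Substituting this identification into the four factors $v_k$ and then expanding the product $\prod_{k=1}^4 v_k$ reproduces the eight-factor expression in the statement.

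The only ``obstacle'' here is notational rather than mathematical: one must be careful to use $\eta$ consistently so that each $P_{i,j}(a)$ may be regarded simultaneously as an element of $\mathbb{M}_n(\mathcal{O}_n)$ and of $\mathcal{O}_n$, and so that the self-adjoint unitaries $1-2\eta(P_{i,j}(a))$ coming out of Theorem~\ref{unitaryfactor} are recognized as the claimed involutions $1-2P_{i,j}(a)$ in $\mathcal{O}_n$. No computation beyond Theorem~\ref{unitaryfactor} and Proposition~\ref{matrixcuntz} is required.
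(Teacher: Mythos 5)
Your proposal is correct and follows the paper's own route exactly: the paper likewise obtains the Corollary by verifying that $\mathcal{O}_n$ is simple, unital, purely infinite with $K_1(\mathcal{O}_n)=0$, invoking Theorem~\ref{unitaryfactor} with the matrix units $e_{i,j}=s_is_j^*$, and using Proposition~\ref{matrixcuntz} to read the factors $1-2\eta(P_{i,j}(\cdot))$ as involutions $1-2P_{i,j}(\cdot)$ inside $\mathcal{O}_n$. Nothing is missing.
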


Now, in order to prove out main theorem, let us recall the following
result of M. Leen.

\begin{Theorem}[\cite{Leen}, Theorem 3.8]\label{Leen}
Let $A$ be a simple, unital purely infinite $C^*$-algebra. Then the
$*$-symmetries (self-adjoint unitaries) generate the connected
component of the unity $\mathcal{U}_0(A)$.
\end{Theorem}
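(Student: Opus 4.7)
The plan is to follow the classical three-stage strategy for factorization results of this kind, adapting it to the purely infinite simple setting. The first stage is to observe that every $u\in \mathcal{U}_0(A)$ is a finite product of exponentials $\exp(ih)$ with $h=h^*\in A$; this reduces the problem to writing each such exponential as a product of $*$-symmetries.

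The second, and most technical, stage uses pure infiniteness in an essential way. The key elementary identity is that $\exp(i\pi p)=1-2p$ is already a $*$-symmetry for every projection $p\in \mathcal{P}(A)$, and more generally that a ``rotation'' $\exp(i\theta p)$ in the corner $pAp$ can be written as a product of two $*$-symmetries. To bootstrap from these elementary rotations to an arbitrary exponential $\exp(ih)$, I would first approximate $h$ in norm by a self-adjoint element with finite spectrum, so that $\exp(ih)$ becomes a product of commuting rotations, and then invoke the structural consequences of pure infiniteness à la Cuntz: every non-zero projection of $A$ is properly infinite and can be halved into mutually equivalent orthogonal sub-projections. This halving allows a $2\times 2$ matrix trick inside any corner, within which a unitary sufficiently close to the identity admits an explicit factorization as a short product of $*$-symmetries, in the spirit of Broise and Pearcy--Topping.

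The third stage closes the argument by showing that the subgroup $G\subseteq \mathcal{U}_0(A)$ generated by the $*$-symmetries of $A$ is clopen. The factorizations produced in stage two yield a uniform bound on the number of $*$-symmetry factors needed for every unitary in some norm-neighborhood of $1$. Hence $G$ contains an open neighborhood of the identity and is therefore open in $\mathcal{U}_0(A)$; as a subgroup, $G$ is then also closed (its complement is a union of open cosets), and connectedness of $\mathcal{U}_0(A)$ forces $G=\mathcal{U}_0(A)$.

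The main obstacle I expect is in stage two: exhibiting an explicit factorization inside $pAp$ using only the properly infinite structure, rather than the weak-closure techniques available in von Neumann algebras, is precisely where Leen's contribution lies, and where Cuntz's description of projections in simple purely infinite $C^*$-algebras is indispensable. The bound of eleven symmetries announced elsewhere in the paper comes from careful bookkeeping of the $2\times 2$ rotations at each step, so a purely qualitative argument (producing some uniform bound without optimising it) is already enough to deduce the theorem as stated.
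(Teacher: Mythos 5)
The paper does not actually prove this statement: it is quoted verbatim as Theorem~3.8 of \cite{Leen}, and the only place the underlying mechanism appears is in the proof of Theorem~\ref{unitaryfactor}, where Leen's argument for his Theorem~3.5 is reproduced and made explicit. Measured against that mechanism, your outline has the right general shape (reduce to exponentials, exploit proper infiniteness, close with a subgroup argument), but the core of stage two contains a claim that is false as stated and conceals exactly the point of Leen's theorem. The assertion that a rotation $\exp(i\theta p)=1-p+e^{i\theta}p$ ``can be written as a product of two $*$-symmetries'' is not an elementary identity: already in $\mathbb{M}_2(\mathbb{C})$ the matrix $\mathrm{diag}(e^{i\theta},1)$ is a product of self-adjoint unitaries only when $e^{i\theta}=\pm 1$, by the determinant obstruction. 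What \emph{is} elementary is that $\mathrm{diag}(\alpha,\alpha^{-1},1)$ is a product of two $*$-symmetries (the matrices $\beta_1,\beta_2$ appearing in the paper's proof of Theorem~\ref{unitaryfactor}, which are exactly $1-2P_{1,2}(-\alpha)$ and $1-2P_{1,2}(-1)$). The entire difficulty is absorbing the unwanted $\alpha^{-1}$ block, and this is done not by a $2\times 2$ trick in a corner but by the infinite telescoping (Eilenberg-swindle) construction: embedding $\rho A\rho\otimes\mathbb{K}$ into $r_1Ar_1$ via infinitely many orthogonal equivalent subprojections $\rho_k$, and factoring $\mathrm{diag}(x,1,1,\dots)$ as $b_1b_2b_3b_4$ with the telescoping powers $x^{1/2},x^{1/4},x^{1/8},\dots$ so that each $b_i$ collapses under $\varphi_1$ or $\varphi_2$ to something of the form $\mathrm{diag}(\alpha,\alpha^{-1},1)$. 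Your proposal names this as ``where Leen's contribution lies'' but does not supply it, so the argument as written does not go through.

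Two smaller points. First, your approximation of $h$ by a self-adjoint element of finite spectrum requires real rank zero; this does hold for simple purely infinite $C^*$-algebras (Zhang), but it is a nontrivial input that must be cited, and in any case Leen's actual proof does not proceed this way. Second, your stage three (the generated subgroup is open, hence clopen, hence all of $\mathcal{U}_0(A)$) is a legitimate closing move \emph{if} stage two delivered a uniform bound near the identity, but it is superfluous in the argument the paper follows: the swindle factorization is global and produces a fixed number (eleven) of $*$-symmetry factors for every element of $\mathcal{U}_0(A)$ directly, with no local-to-global step needed.
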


 So Leen proved that every unitary in the component of the unity, can be
 written as a finite product of self-adjoint unitaries. We shall use Leen's approach, indeed, we fix some
arbitrates, and we modify some of his arguments. Then using the
system of matrix units and the mappings $\eta_1,\ \eta$, we write
some arguments in an explicit way. Finally, we deduce that eight of
those self-adjoint unitaries, as factors, are in fact associated to
the projections $P_{i,j}(u)$, for some $u\in \mathcal{U}(A)$.

Let us introduce the following lemma which in fact, M. Leen used in
his proof, and we do in our proof as well.
\begin{Lemma}\label{unitarymatrixdecomp}
Let $A$ be a simple, unital purely infinite $C^*$-algebra, and let
$\rho$ be a non-trivial projections of $A$. If $a \in
\mathcal{U}_0(A)$, then there exist self-adjoint unitaries
$z_1,z_2,z_3$ of $A$ and $x\in \mathcal{U}_0(A)$ such that
\[z_1az_2z_3=  \left( \begin{array}{cc}
x & 0 \\
0 & 1- \rho
\end{array} \right).\]
\end{Lemma}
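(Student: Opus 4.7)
The strategy is a block-matrix reduction of $a$ with respect to the decomposition $1=\rho+(1-\rho)$, exploiting the abundance of partial isometries in a simple purely infinite $C^*$-algebra to build three self-adjoint unitaries that successively clear out the off-diagonal blocks and normalize the $(2,2)$-entry to $1-\rho$.

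The first ingredient is Cuntz's subequivalence theorem: in a simple purely infinite $C^*$-algebra any non-zero projection is Murray--von Neumann equivalent to a subprojection of any other non-zero projection. In particular, there is a partial isometry $w\in A$ with $w^{*}w=1-\rho$ and $ww^{*}\le\rho$. Writing $a$ as a $2\times 2$ matrix with respect to $\rho$ and $1-\rho$, I would use $w$ to construct a self-adjoint unitary $z_3$ of the form $z_3=w+w^{*}+r$, where $r$ is a self-adjoint unitary on $1-w^{*}w-ww^{*}$, so that $z_3$ interchanges $1-\rho$ with the subprojection $ww^{*}$ of $\rho$. Multiplying on the right by $z_3$ rotates the $(1-\rho)$-column of $a$ into the $\rho$-corner and places $az_3$ in a convenient ``rotated'' position.

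Next, I would choose $z_2$, a self-adjoint unitary essentially supported on the $(1-\rho)$-corner (where $(1-\rho)A(1-\rho)$ is itself simple purely infinite), so that the $(2,2)$-entry of $az_3 z_2$ becomes exactly $1-\rho$; the existence of such a $z_2$ is a small self-adjoint-unitary factorization problem in a simpler corner, handled by the same type of partial isometry as was used to build $z_3$. Finally, $z_1$ is constructed analogously on the left to annihilate the residual $\rho A(1-\rho)$- and $(1-\rho)A\rho$-blocks, leaving the block-diagonal matrix $\mathrm{diag}(x,1-\rho)$ with $x\in\rho A\rho$; because $a$ and each $z_i$ belong to $\mathcal{U}_0(A)$, so does $x$.

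The main obstacle is the precise bookkeeping at each step: one must track how each left/right multiplication transforms the four blocks and verify that the off-diagonal entries can be cleared without disturbing the desired $(2,2)$-block. This is where pure infiniteness does the essential work. The plentiful supply of partial isometries between any two non-zero projections (in $A$ and in its corners) provides enough freedom to implement any prescribed ``rotation'' of one block into another by conjugation with a self-adjoint unitary, which is what makes the three-factor reduction achievable.
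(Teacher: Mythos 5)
The decisive step of your outline --- choosing a self-adjoint unitary $z_2$ supported in the $(1-\rho)$-corner so that the $(2,2)$-block of $az_3z_2$ becomes exactly $1-\rho$ --- does not go through, and it is exactly where the whole difficulty of the lemma is hiding. Write elements of $A$ as $2\times 2$ block matrices with respect to $\rho$ and $1-\rho$. If $z_2$ acts as the identity on the $\rho$-corner, then $(az_3z_2)_{22}=(az_3)_{22}\,z_2'$, where $z_2'$ is the $(1-\rho)$-block of $z_2$; forcing this product to equal $1-\rho$ requires $z_2'$ to be the inverse of $(az_3)_{22}$, and that inverse is a self-adjoint unitary only when $(az_3)_{22}$ already is one. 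There is no reason the $(2,2)$-block of $az_3$ should even be invertible in the corner, let alone a self-adjoint unitary. Note also that your last step is vacuous: if a unitary $c$ satisfies $c_{22}=1-\rho$, then $c^*c=1$ gives $c_{12}^*c_{12}=(1-\rho)-c_{22}^*c_{22}=0$ and $cc^*=1$ gives $c_{21}=0$, so the off-diagonal blocks vanish automatically and $z_1$ has nothing left to do. In effect your plan asks two right multiplications by involutions to carry an arbitrary unitary into the subgroup $\mathcal{U}(\rho A\rho)+(1-\rho)$, and no mechanism for this is supplied.

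A second warning sign is that the hypothesis $a\in\mathcal{U}_0(A)$ plays no role in your construction: Cuntz subequivalence, the swap $z_3=w+w^*+r$, and the proposed block-clearing would apply verbatim to any unitary. The paper's own proof is only a pointer to the first part of Leen's proof of Theorem 3.5 in \cite{Leen}, and there the connectedness hypothesis is load-bearing: one writes $a$ as a finite product of exponentials $\exp(ih_k)$ (possible precisely because $a\in\mathcal{U}_0(A)$) and uses isometries onto subprojections of $\rho$ to compress these exponentials into the corner $\rho A\rho$, the three $*$-symmetries arising from partial isometries implementing the comparison of $1-\rho$ with subprojections of $\rho$. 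Your write-up replaces this mechanism with the assertion that pure infiniteness provides ``enough freedom,'' but the freedom it provides is equivalence of projections, not the ability to prescribe a block of a unitary by multiplying with involutions. To repair the argument you would need to import Leen's exponential--compression step (or an equivalent use of $a\in\mathcal{U}_0(A)$), not merely more partial isometries.
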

\begin{proof}
Mimic the first part of the proof of Theorem 3.5 in \cite{Leen},
with replacing symmetries by $*$-symmetries and invertible by
unitaries.
\end{proof}
\noindent \textbf{Proof of Theorem \ref{unitaryfactor}}:
\begin{proof} Since $A$ is a simple, unital purely infinite $C^*$-algebra, using \cite{Cu1}, we have $K_1(A) \simeq
\mathcal{U}(A)/\mathcal{U}_0(A)$. As $K_1(A)$ is assumed to be
trivial, we have $\mathcal{U}(A)=\mathcal{U}_0(A)$. Now suppose
$a\in \mathcal{U}(A)$, we shall revise Leen's proof, for many
details, we just refer to him, and we explain new arguments which
shall lead to our result.
  Let $p=e_{1,1}$, as $p\sim 1$, use Proposition \ref{decom} and the isomorphism $\Delta_u$ ($u^*u=e_{1,1}, uu^*=1$) to find
  a projection $p_1<p$ (precisely, $p_1=u^*e_{1,1}u$) which is equivalent to $p$
  moreover,
   set the partial isometry $v=u^*e_{1,1}$, and put $\rho = p -p_1$. Using
   Lemma \ref{unitarymatrixdecomp}, there exist self-adjoint
   unitaries $z_1,z_2$ and $z_3$ such that
\[ z_1az_2z_3 = \left( \begin{array}{cc}
x & 0 \\
0 & 1- \rho
\end{array} \right)  ,\] where $x \in \mathcal{U}(\rho A \rho
)$. We will show that the right hand side can be written as a
product of eight self-adjoint unitaries, each of them is associated
to a projection of the form $\eta P_{i,j}(u)$, for some $u\in
\mathcal{U}(A)$. We may replace $z_1az_2z_3$ by $a$.

  Choose $q=e_{2,2}$, $r=e_{3,3}$ and put $r_{1}=p + q +
 r$, then we have $q\sim r <1-p-q$.
  Let $v_{1}=e_{2,1},\ v_{2}=e_{3,2},\ \text{and} \
v_{3}=e_{1,3}$,
 so $v_1,v_2$ and $v_3$ are partial isometries such that
$$ v_{1}^{*}v_{1}=p,\ v_{1}v_{1}^{*}=q,\
 v_{2}^{*}v_{2}= q,\ v_{2}v_{2}^{*}= r,\
 v_{3}^{*}v_{3}= r,\ \text{and}\ v_{3}v_{3}^{*}= p .$$
 Let $w= v_1+v_2 + vv_3$. Recall that
  $\mathbb{K}$ denotes the compact operators on
the separable,
 infinite dimensional Hilbert space $\ell^2(\mathbb{N})$. By $I$ in $\rho A\rho \otimes
\mathbb{K}$ we mean $\rho \otimes 1_{\infty}(\mathbb{C})$.

   Leen defined in his proof three
isomorphisms: \( \rho A\rho \otimes \mathbb{K} \longrightarrow
r_{1}Ar_{1}\).
 In order to build the first
of the three copies of $\rho A\rho \otimes \mathbb{K}$, he defined
an infinite collection of projections using $w$ and $\rho$ as
follows: $\rho_k = w\rho_{k-1}w^*$, for $k\geq 2$, $\rho_1 =\rho$
and $w_k=w^{k-1}\rho$. Then $w_kw_k^*=\rho_k$ and $w_k^*w_k=\rho$,
the $\rho_k's$ are orthogonal equivalent projections which satisfy
$\rho_{3n-2}<p$, $\rho_{3n-1}<q$ and $\rho_{3n}<r$, for $n\geq 1$.

 Define $\chi : \rho A\rho \otimes \mathbb{K} \rightarrow
 r_{1}Ar_{1}$ by $y\otimes E_{i,j}(\mathbb{C})\mapsto w_iyw_j^*$, and $I\mapsto r_1$. Next we produce two other copies of $\rho A\rho \otimes
 \mathbb{K}$ in $r_{1}Ar_{1}$ as follows: For each $n$ choose
 orthogonal equivalent projections $\{e_{3n-2}^j: j=1,\ldots ,
 4^{n-1}\}$ such that $e_{3n-2}^j\sim \rho_{3n-2}$ and
 \[ \rho_{3n-2} = \sum_{j=1}^{4^{n}-1}e_{3n-2}^j ,\]
  then put $e_{3n-1}^j = w(e_{3n-2}^j)w^*$ and $e_{3n}^j =w(e_{3n-1}^j)w^*$,
   for each $n$ and $j$, and order the $e_i^j$'s
   as: $e_1^1,e_2^1,e_3^1,e_4^1,\ldots e_4^4,e_5^1,\ldots $. Use
  the partial isometries which implements the equivalences
   $\rho_{3n-2}\sim e_{3n-2}^j$ and $\rho_{3n-2}\sim \rho$ to define
  partial isometries $r_{3n-2}^j$ so that
  $r_{3n-2}^j(r_{3n-2}^j)^* = \rho$ and
  $(r_{3n-2}^j)^*r_{3n-2}^j=e_{3n-2}^j$, and put $r_{3n-1}^j=
  r_{3n-2}^jw^*$ and $r_{3n}^j= r_{3n-1}^jw^*$. Then use the
  $r_i^j$ to define $\varphi_1 : \rho A\rho \otimes \mathbb{K} \rightarrow
 r_{1}Ar_{1}$.

Similarly choose orthogonal equivalent projections $\{f_i^j\}$ such
that $\rho =f_1^1$ and
\[ \rho_{3n-1} = \sum_{j=1}^{2.4^{n-1}}f_{3n-1}^j\:,\]
for $n\geq 1$. Then put $f_{3n}^j= w(f_{3n-1}^j)w^*$ and $f_{3n+1}^j
=w(f_{3n}^j)w^*$, for any $n$ and $j$. Order the $f_i^j$ as:
\[ f_1^1,f_2^1,f_2^2,f_3^1f_3^2,f_4^1,f_4^2,f_5^1,\ldots ,
f_5^8,f_6^1,\ldots .\] Using the partial isometries which implement
$f_i^j \sim\rho$, define $\varphi_2 :\: \rho A\rho \otimes
\mathbb{K} \rightarrow
 r_{1}Ar_{1}$.

 \noindent Recall that $w= e_{2,1} +e_{3,2}+u^{*}e_{1,3}$, then
\[ w^{2}=
e_{2,1}u^{*}e_{1,3}+e_{3,1}+e_{3,2}u^{*}e_{1,3}+u^{*}e_{1,2}+u^{*}e_{1,3}u^{*}e_{1,3}
\]
Now for $1 \leq k \leq 3$, let $u_{k}= w^{k-1}p$ therefore
$u_{k}=e_{k,1}$. Define the map
\[ \zeta : r_{1}Ar_{1} \longrightarrow \mathbb{M}_{3}(pAp) \]
\[ \text{by} \: x \longmapsto (u_{i}^{*}xu_{j})_{i,j=1}^{3} \]
\[ \text{i.e.} \: x \longmapsto (e_{1,i}xe_{j,1})_{i,j=1}^{3} .\]
The map $\zeta$ is a $*$-isomorphism, indeed
\[ \zeta ^{-1} : \mathbb{M}_{3}(pAp) \longrightarrow r_{1}Ar_{1} \]
\[ \text{is defined by}\: (a_{i,j}) \longmapsto
\sum_{i,j}^{3} e_{i,1}a_{i,j}e_{1,j}. \]

Now we turn to factorization of $a$ (In fact, we factorize
$z_1az_2z_3$). Let $r_{0}= 1-r_{1}$. From the definitions of
$\varphi_{i}$'s, and since
 $a = \left(
\begin{array}{cc}
x & 0 \\
0 & 1- \rho
\end{array} \right)$,  where $x \in \mathcal{U}(\rho A \rho )$,
 we have the following:
\begin{eqnarray*}
\varphi_{i}(\text{diag}(x,1,1,\ldots ))\:&=&\:
\varphi_{i}(\text{diag}(x-\rho, 0,0 , \ldots
)+ I)\\
&=& r_{1} + \varphi_{i}(\text{diag}(x-\rho,0,0,\ldots ))\\
&=& r_{1} + x-\rho \\
&=& p + q+ r +x-\rho\\
&=& a-r_{0}.
\end{eqnarray*}
If $a-r_0$ is a product of $*$-symmetries in $r_1Ar_1$, then $a$ is
a product of $*$-symmetries in $A$. Using [\cite{Leen}, proof of Theorem 1], we factorize $\text{diag}(x,1,1,\ldots )$ as follows:\\
\( \text{diag}(x,1,1,\ldots ) =\\
\text{diag}(x^{1/2},x^{-1/2},1,x^{1/8},x^{1/8},x^{1/8},
x^{1/8},x^{-1/8},x^{-1/8},x^{-1/8},x^{-1/8},1,1,1,1,\ldots )\\
. \text{diag}(x^{1/2},1,x^{-1/2},x^{1/8},x^{1/8},x^{1/8},x^{1/8},1,1,1,1,x^{-1/8},x^{-1/8},x^{-1/8},x^{-1/8},\ldots )\\
.
\text{diag}(1,x^{1/4},x^{1/4},x^{-1/4},x^{-1/4},1,1,x^{1/16},x^{1/16},x^{1/16},x^{1/16},x^{1/16},x^{1/16},
x^{1/16},x^{1/16},x^{-1/16},\ldots )\\
.
\text{diag}(1,x^{1/4},x^{1/4},1,1,x^{-1/4},x^{-1/4},x^{1/16},x^{1/16},x^{1/16},x^{1/16},x^{1/16},x^{1/16},x^{1/16},x^{1/16}
,\ldots)\\
= b_{1}b_{2}b_{3}b_{4}. \)\\

 We must factorize $b_i$ as a product of $*$-symmetries. We use
$\varphi_1$ to factorize $b_1$, $b_2$, and use $\varphi_2$ to
factorize $b_3$, $b_4$. We check the details only for $b_1$ and
$b_2$.

\noindent Let us first factorize $b_{1}$.
\[ b_{1}= \:(b_{1}^{1},b_{1}^{2}, \ldots , b_{1}^{n}, \ldots );\]
 where $b_{1}^{n}= \text{diag}(x_{n},x_{n}^{-1},1)$ and $x_{n}$ be the diagonal $4^{n-1}\times 4^{n-1}$
 matrix with all diagonal entries equal to $x^{(\frac{1}{2.4^{n-1}})}$
, so $b_{1} \in\:
\prod_{n=1}^{\infty}\mathbb{M}_{3}(\mathbb{M}_{4^{n-1}}(\rho A\rho
))$. Then Leen defined the map
\[ \Phi : \prod_{n=1}^{\infty}\mathbb{M}_{3}(\mathbb{M}_{4^{n-1}}(\rho A\rho )) \longrightarrow \prod_{n=1}^{\infty}\mathbb{M}_{3}(\rho A\rho) \]
Let $\Phi (b_{1})= c^{1}$. He showed that $\chi (c^{1})=
\varphi_{1}(b_{1})$, and
\[ \zeta (\chi (c^{1}))= \left( \begin{array}{ccc}
\alpha & 0 & 0 \\
0 & \alpha^{-1} & 0 \\
0 & 0  & p
\end{array} \right); \]
 where $\alpha$ is a unitary in $pAp$.
Let $ \beta_{1} =\left( \begin{array}{ccc}
0 & \alpha & 0 \\
\alpha^{-1} & 0 & 0 \\
0 & 0  & p
\end{array} \right) $ and $\beta_{2} = \left( \begin{array}{ccc}
0 & p & 0 \\
p & 0 & 0 \\
0 & 0  & p
\end{array} \right), $
 so $\beta_{1} \beta_{2} = \zeta (\chi (c^{1}))$ and
\[\frac{I-\beta_{1}}{2}= P_{1,2}(-\alpha),\:\:\:\frac{I-\beta_{2}}{2}= P_{1,2}(-p), \]
 where now $P_{1,2}(-\alpha),P_{1,2}(-p)\:\:\in \mathcal{P}(\mathbb{M}_{3}(pAp))$.
Therefore,
\[ \chi (c^{1})= \zeta^{-1}(\beta_{1})\zeta^{-1}(\beta_{2})
=(r_{1}-2\zeta^{-1}(P_{1,2}(-\alpha)))(r_{1}-2\zeta^{-1}(P_{1,2}(-p))),
\] but $\zeta^{-1}(P_{1,2}(-\alpha))=
\eta_{1}(P_{1,2}(-\alpha))$ and
$\zeta^{-1}(P_{1,2}(-p)= \eta_{1}(P_{1,2}(-p))$.\\
Now to factorize $b_{2}$:
\[ b_{2}= (b_{2}^{1},b_{2}^{2},\ldots ,b_{2}^{n}, \ldots )\:\text{where}\:
b_{2}^{n}= \text{diag}(x_{n},1,x_{n}^{-1}) \] and $x_{n}$ is the
same as in $b_{1}$ so $b_{2}\in\:
\prod_{n=1}^{\infty}\mathbb{M}_{3}(\mathbb{M}_{4^{n-1}}(\rho
A\rho))$. Let $\Phi (b_{2})= c^{2}$. $\chi (c^{2})\:=\:
\varphi_{1}(b_{2})$
\[\zeta (\chi (c^{2}))\:=\: \left( \begin{array}{ccc}
\alpha & 0 & 0 \\
0 & p & 0 \\
0 & 0  & \alpha^{-1}
\end{array} \right) \:=\:  \left( \begin{array}{ccc}
0 & 0 & \alpha \\
0 & p & 0 \\
\alpha^{-1} & 0  & 0
\end{array} \right) \left( \begin{array}{ccc}
0 & 0 & p \\
0 & p & 0 \\
p & 0  & 0
\end{array} \right)=\beta_{3}\beta_{4} \]
so $\beta_{3},\:\beta_{4}$ are self-adjoint unitaries in
$\mathbb{M}_{3}(pAp)$, indeed
\[ \frac{I-\beta_{3}}{2}= P_{1,3}(-\alpha),\:\text{and}\: \frac{I-\beta_{4}}{2}=P_{1,3}(-p) \]
therefore,
\[ \chi (c^{2})= \zeta^{-1}(\beta_{3})\zeta^{-1}(\beta_{4})=
(r_{1}-2\zeta^{-1}(P_{1,3}(-\alpha)))(
r_{1}-2\zeta^{-1}(P_{1,3}(-p))) \] but
$\zeta^{-1}(P_{1,3}(-\alpha))=
\eta_{1}(P_{1,3}(-\alpha)),\:\text{and} \:
\zeta^{-1}(P_{1,3}(-p))=\eta_{1}(P_{1,3}(-p)) $.\\

\noindent Now we use $\varphi_2$ to factorize $b_{3}$ and $b_4$:
\[ b_{3}= (1,b_{3}^{1},b_{3}^{2},\ldots ,b_{3}^{n},\ldots );\:\text{where}
\:b_{3}^{n}=\text{diag}(x_{n},x_{n}^{-1},1) \] and $x_{n}$ is a
$2.4^{n-1}\times 2.4^{n-1}$  diagonal  of diagonal entries matrix
 $x^{\frac{1}{4.4^{n-1}}}$\\
so $b_{3} \in (\rho A\rho)\times (\prod
\mathbb{M}_{3}(\mathbb{M}_{2.4^{n-1}}(\rho A\rho))) $. Then we
define the map
\[ \Phi^{\prime} :(\rho A\rho)\times (\prod \mathbb{M}_{3}(\mathbb{M}_{2.4^{n-1}}(\rho A\rho))) \longrightarrow (\rho A\rho)\otimes \mathbb{K}, \]
which acts as the identity map on the first component. Let
$\Phi^{\prime}(b_{3})= d^{1}$. We have $\chi (d^{1})=
\varphi_{2}(b_{3})$.
\[ \zeta (\chi (d^{1})= \left( \begin{array}{ccc}
\gamma & 0 & 0 \\
0 & \gamma^{-1} & 0 \\
0 & 0 & p
\end{array} \right); \] where $\gamma$ is a unitary in $\rho A\rho$
, so similar to case $b_{1}$, just replace $\alpha$ by $\gamma$, to
get
\[ \chi (d^{1})= (r_{1}-2\eta_{1}(P_{1,2}(-\gamma)))(r_{1}-2\eta_{1}(P_{1,2}(-p))) .\]
Now finally to factorize $b_{4}$:
\[ b_{4}= \text{diag}(1,b_{4}^{1},b_{4}^{2},\ldots , b_{4}^{n},\ldots );\:\text{where} \:b_{4}^{n}\:=\:
\text{diag}(x_{n},1,x_{n}^{-1}), \] and $x_{n}$ is the same as in
the case of $b_{3}$ Let $\Phi^{\prime}(b_{4})=d^{2}$. We have $\chi
(d^{2})=\varphi_{2}(b_{4})$.
\[ \zeta (\chi (d^{2})= \left( \begin{array}{ccc}
\gamma & 0 & 0 \\
0 & p & 0 \\
0 & 0 & \gamma^{-1}
\end{array} \right) \]
again, it's similar to case $b_{2}$, so
\[ \chi (d^{2})= (r_{1}-2\eta_{1}(P_{1,3}(-\gamma)))(r_{1}-2\eta_{1}(P_{1,3}(-p))) .\]

\noindent Then, we factorize $a-r_{0}$ as
\[ a-r_{0} = \chi (c^{1})\chi (c^{2})\chi (d^{1})\chi (d^{2}) \]
therefore,
\[ \left( \begin{array}{cc}
a-r_{0} & 0 \\
0 & r_{0} \end{array} \right) = \left( \begin{array}{cc}
\chi (c^{1}) & 0 \\
0 & r_{0} \end{array} \right) \left( \begin{array}{cc}
\chi (c^{2}) & 0 \\
0 & r_{0} \end{array} \right) \left( \begin{array}{cc}
\chi (d^{1}) & 0 \\
0 & r_{0} \end{array} \right) \left( \begin{array}{cc}
\chi (d^{2}) & 0 \\
0 & r_{0} \end{array} \right). \] And also we have the following:
\[ \left( \begin{array}{cc}
\chi (c^{1}) & 0 \\
0 & r_{0} \end{array} \right) =\left( \begin{array}{cc}
r_{1}-2\eta_{1}(P_{1,2}(-\alpha)) & 0 \\
0 & r_{0} \end{array} \right) \left( \begin{array}{cc}
r_{1}-2\eta_{1}(P_{1,2}(-p)) & 0 \\
0 & r_{0} \end{array} \right) \]
\[ \left( \begin{array}{cc}
\chi (c^{2}) & 0 \\
0 & r_{0} \end{array} \right) =\left( \begin{array}{cc}
r_{1}-2\eta_{1}(P_{1,3}(-\alpha)) & 0 \\
0 & r_{0} \end{array} \right) \left( \begin{array}{cc}
r_{1}-2\eta_{1}(P_{1,3}(-p)) & 0 \\
0 & r_{0} \end{array} \right) \]

\[ \left( \begin{array}{cc}
\chi (d^{1}) & 0 \\
0 & r_{0} \end{array} \right) =\left( \begin{array}{cc}
r_{1}-2\eta_{1}(P_{1,2}(-\gamma)) & 0 \\
0 & r_{0} \end{array} \right) \left( \begin{array}{cc}
r_{1}-2\eta_{1}(P_{1,2}(-p)) & 0 \\
0 & r_{0} \end{array} \right) \]
\[ \left( \begin{array}{cc}
\chi (d^{2}) & 0 \\
0 & r_{0} \end{array} \right) =\left( \begin{array}{cc}
r_{1}-2\eta_{1}(P_{1,3}(-\gamma)) & 0 \\
0 & r_{0} \end{array} \right) \left( \begin{array}{cc}
r_{1}-2\eta_{1}(P_{1,3}(-p)) & 0 \\
0 & r_{0} \end{array} \right) . \]
 Therefore,
\begin{eqnarray*}
z_1az_2z_3 &=& (1-2\eta_{1}(P_{1,2}(-\alpha)))(1-2\eta_{1}(P_{1,2}(-p)))(1-2\eta_{1}(P_{1,3}(-\alpha)))(1-2\eta_{1}(P_{1,3}(-p))) \\
&.&
(1-2\eta_{1}(P_{1,2}(-\gamma)))(1-2\eta_{1}(P_{1,2}(-p)))(1-2\eta_{1}(P_{1,3}(-\gamma)))(1-2\eta_{1}(P_{1,3}(-p)))
\end{eqnarray*}
 The factors in the right side are all self-adjoint unitaries in $A$. Hence using the
 mapping $\eta$, we have that
 \begin{eqnarray*}
a &=& z_1(1-2\eta(P_{1,2}(-\alpha)))(1-2\eta(P_{1,2}(-1)))(1-2\eta(P_{1,3}(-\alpha)))(1-2\eta(P_{1,3}(-1))) \\
&.&
(1-2\eta(P_{1,2}(-\gamma)))(1-2\eta(P_{1,2}(-1)))(1-2\eta(P_{1,3}(-\gamma)))(1-2\eta(P_{1,3}(-1)))z_2z_3
\end{eqnarray*}
where $\alpha$ and $\gamma$ are unitaries in $A$, and this ends the
proof.
\end{proof}

\noindent Finally, let us finish this section by the following open
question:
\begin{question}\label{openQ}
In the Cuntz algebra $\mathcal{O}_n$, do self-adjoint unitaries of
the form $\{1-2P_{i,j}(a)\}$ generate the unitary group
$\mathcal{U}(\mathcal{O}_n)?$
\end{question}

\section{$K$-Theory of Certain Projections}
In this section, we study the $K_0$-class of the projections
$P_{i,j}(u)$, where $u$ is a unitary of some unital $C^*$-algebra
$A$. In particular, if $A$ is a simple purely infinite
$C^*$-algebra, with $K_1(A)=0$, or $A$ is a von Neumann factor of
type $II_1$, or $III$, then for any unitary $u$ of $A$, $P_{i,j}(u)$
has trivial $K_0$-class. Afterwards, we present an application of
Theorem \ref{unitaryfactor}, to the case of Cuntz algebras.
\begin{Proposition}\label{pijclass1}
Let $A$ be a unital $C^*$-algebra. If $v$ is a unitary in $A$ of
finite order, then $[P_{i,j}(v)]= [1]$ in $K_0(A)$.
\end{Proposition}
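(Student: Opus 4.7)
The plan is to show that $P_{i,j}(v)$ is Murray--von Neumann equivalent to the diagonal matrix unit $1\otimes E_{i,i}$ inside $\mathbb{M}_n(A)$; since the canonical isomorphism $K_0(\mathbb{M}_n(A))\cong K_0(A)$ sends $[1\otimes E_{i,i}]$ to $[1]$, this will yield the claim at once.

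First I would exploit the fact that $v$ is a unitary, so $vv^* = v^*v = 1$ and $(1+vv^*)^{-1} = \tfrac{1}{2}$, to reduce the Dye formula to
\[
P_{i,j}(v) \;=\; \tfrac{1}{2}\bigl(1\otimes E_{i,i} + v\otimes E_{i,j} + v^*\otimes E_{j,i} + 1\otimes E_{j,j}\bigr).
\]
Then I would introduce the explicit partial isometry
\[
w \;=\; \tfrac{1}{\sqrt{2}}\bigl(1\otimes E_{i,i} + v^*\otimes E_{j,i}\bigr)\in\mathbb{M}_n(A),
\]
and verify by direct multiplication---using $vv^* = v^*v = 1$ together with the standard matrix unit relations $E_{k,l}E_{l,m}=E_{k,m}$ and $E_{k,l}E_{m,n}=0$ whenever $l\neq m$---that $w^*w = 1\otimes E_{i,i}$ and $ww^* = P_{i,j}(v)$. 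Two of the four cross terms vanish because $i\neq j$, while the surviving products $E_{i,j}E_{j,i} = E_{i,i}$ and $E_{j,i}E_{i,j} = E_{j,j}$ each contribute exactly half of the target.

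There is no essential obstacle here: once the formula for $w$ is pinned down, the remaining verification is routine bookkeeping. I would note that the finite-order hypothesis on $v$ does not seem to enter this argument---the same partial isometry works for any unitary $v$. Equivalently, the family $t\mapsto P_{i,j}(tv)$ is a norm-continuous path of projections, well-defined for every $t\in[0,1]$ since $1+t^2vv^* = 1+t^2$ is invertible, connecting $P_{i,j}(0) = 1\otimes E_{i,i}$ to $P_{i,j}(v)$ and thus giving the same conclusion in $K_0(A)$.
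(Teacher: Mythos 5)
Your proof is correct, but it takes a genuinely different route from the paper's. The paper establishes \emph{unitary} equivalence: it conjugates $P_{i,j}(v)$ to $1\otimes E_{i,i}$ by the explicit unitary $W=\tfrac{1}{\sqrt{2}}\bigl(v\otimes E_{i,i}+v\otimes E_{i,j}+E_{j,i}-E_{j,j}+\sum_{k\notin\{i,j\}}\sqrt{2}\otimes E_{k,k}\bigr)$ and computes $W^*P_{i,j}(v)W=E_{i,i}$. You instead exhibit a partial isometry $w$ with $w^*w=1\otimes E_{i,i}$ and $ww^*=P_{i,j}(v)$, i.e.\ Murray--von Neumann equivalence; both imply equality of $K_0$-classes, and your algebra checks out (the two surviving cross terms in $w^*w$ each contribute $\tfrac12(1\otimes E_{i,i})$, and $ww^*$ reproduces all four terms of the reduced Dye formula). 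Your observation that the finite-order hypothesis is never really used is also accurate: the paper's own $W$ needs only $vv^*=v^*v=1$, and the exponent $m$ enters merely as the notational device $v^*=v^{m-1}$. Your closing homotopy remark, $t\mapsto P_{i,j}(tv)$, is in fact the strongest of the three arguments: it uses no unitarity at all, so it gives $[P_{i,j}(a)]=[1]$ for \emph{every} $a\in A$ in any unital $C^*$-algebra, which would subsume not only this proposition but also the later results of Section 5 on the $K_0$-class of $P_{i,j}(u)$ that the paper derives from factorizations into $*$-symmetries.
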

\begin{proof} Consider a unitary $v$ in $A$, such that $v^{m}=1$, for some positive integer $m$.
For $i\neq j$, let $$W= \frac{1}{\sqrt{2}}(v\otimes E_{i,i} +
v\otimes E_{i,j}+E_{j,i} - E_{j,j}+\sum_{k\notin \{i,j\}}
\sqrt{2}\otimes E_{k,k})\ ,$$ then
$W^*=\frac{1}{\sqrt{2}}(v^{m-1}\otimes E_{i,i} +
E_{i,j}+v^{m-1}\otimes E_{j,i} - E_{j,j}+\sum_{k\notin \{i,j\}}
\sqrt{2}\otimes E_{k,k}) ,$ therefore $W \in
\mathcal{U}(\mathbb{M}_{n}(A))$. Moreover,

\begin{eqnarray*}
W^*P_{i,j}(v) W &=& \frac{1}{4}(2v^{m-1}\otimes E_{i,i} +2\otimes
E_{i,j})(\sqrt{2}W)\\
&=& \left( \begin{array}{cccccc}
    0 & 0 & \cdots  & 0 & \cdots & 0 \\
    \vdots &  \vdots  & \ddots & \vdots &\ddots & \vdots   \\
    0 & 0   & \cdots & 1 &\cdots & 0\\
    \vdots &  \vdots  & \ddots & \vdots &\ddots & \vdots   \\
     0 & 0 & \cdots  & 0 & \cdots & 0
\end{array} \right)\  (\text{1 at the i-th place})\\
 &=& E_{i,i}.
\end{eqnarray*}
 This implies that the projection $P_{i,j}(v)$ is unitarily
 equivalent to $E_{i,i}$ in $\mathbb{M}_n(A)$, therefore we have that $[P_{i,j}(v)]=[1]$ in
 $K_0(A)$, hence the proposition has been checked.
\end{proof}

\begin{Proposition}\label{pijclass2} Let $A$ be a unital $C^*$-algebra. If $w_1,w_2$
and $v$ are unitaries of $A$ such that $v$ has order $m$, then
$[P_{i,j}(w_1vw_2)]= [1]$ in $K_0(A)$.
\end{Proposition}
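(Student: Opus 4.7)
The plan is to reduce this to Proposition \ref{pijclass1} by producing a unitary in $\mathbb{M}_n(A)$ that conjugates $P_{i,j}(v)$ to $P_{i,j}(w_1 v w_2)$; since unitary equivalence of projections preserves the $K_0$-class, the result will follow.

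The first observation is a simplification: whenever $a\in A$ is unitary, we have $aa^* = a^*a = 1$, so $(1+aa^*)^{-1} = 1/2$ and $P_{i,j}(a)$ collapses to
\[
P_{i,j}(a) \;=\; \tfrac{1}{2}\bigl(1\otimes E_{i,i} + 1\otimes E_{j,j}\bigr) \;+\; \tfrac{1}{2}\bigl(a\otimes E_{i,j} + a^*\otimes E_{j,i}\bigr).
\]
Since $w_1 v w_2$ is unitary (being a product of unitaries), and so is $v$, this simplified form is available for both $P_{i,j}(v)$ and $P_{i,j}(w_1 v w_2)$.

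Next I would exhibit the conjugating unitary. Define
\[
U \;=\; w_1 \otimes E_{i,i} \;+\; w_2^* \otimes E_{j,j} \;+\; \sum_{k\notin\{i,j\}} 1\otimes E_{k,k} \;\in\; \mathcal{U}(\mathbb{M}_n(A)).
\]
A direct multiplication using the matrix-unit relations shows
\[
U\bigl(1\otimes E_{i,i}\bigr)U^* = 1\otimes E_{i,i},\quad U\bigl(1\otimes E_{j,j}\bigr)U^* = 1\otimes E_{j,j},
\]
\[
U\bigl(v\otimes E_{i,j}\bigr)U^* = (w_1 v w_2)\otimes E_{i,j},\quad U\bigl(v^*\otimes E_{j,i}\bigr)U^* = (w_1 v w_2)^*\otimes E_{j,i}.
\]
Combining these four identities with the simplified form above gives $U\,P_{i,j}(v)\,U^* = P_{i,j}(w_1 v w_2)$.

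Finally, unitary equivalence in $\mathbb{M}_n(A)$ yields $[P_{i,j}(w_1 v w_2)] = [P_{i,j}(v)]$ in $K_0(A)$, and Proposition \ref{pijclass1} identifies the latter with $[1]$. There is no real obstacle here; the only thing to be careful about is the placement of $w_1$ versus $w_2^*$ in the diagonal unitary $U$, which is dictated by how conjugation interacts with the off-diagonal matrix units $E_{i,j}$ and $E_{j,i}$.
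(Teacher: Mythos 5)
Your proposal is correct and follows essentially the same route as the paper: the paper's proof uses exactly the conjugating unitary $W = w_1\otimes E_{i,i} + w_2^*\otimes E_{j,j} + \sum_{k\notin\{i,j\}} E_{k,k}$ and the identity $W P_{i,j}(v) W^* = P_{i,j}(w_1 v w_2)$, then invokes Proposition~\ref{pijclass1}. Your write-up merely makes explicit the simplification of $P_{i,j}(a)$ for unitary $a$ and the four matrix-unit computations that the paper leaves implicit.
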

\begin{proof} As $w_1$ and $w_2$ are unitaries in $A$, then for all
$i\neq j$, $W= w_1\otimes E_{i,i} + w_2^*\otimes E_{j,j}+
\sum_{k\notin \{i,j\}} E_{k,k} \in
 \mathcal{U}(\mathbb{M}_n(A))$. Moreover,
 $ WP_{i,j}(v)W^* = P_{i,j}(w_1vw_2)$, therefore by Proposition
 (\ref{pijclass1}) we have $[P_{i,j}(w_1vw_2)]=[P_{i,j}(v)]=[1]$.
\end{proof}

\begin{Proposition}\label{pijclass3} Let $A$ be a unital
$C^*$-algebra. If $u$ and $v$ are self-adjoint unitaries in $A$,
then $[P_{i,j}(uv)]=[1]$ in $K_0(A)$.
\end{Proposition}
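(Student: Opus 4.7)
The plan is to reduce the statement to Proposition \ref{pijclass2} (the result on $P_{i,j}(w_1 v w_2)$ with $v$ of finite order), which has just been proved. The key observation I would use is that any self-adjoint unitary automatically has finite order: if $u=u^*$ and $u$ is unitary, then $u^2 = u u^* = 1$, so $u$ has order dividing $2$ (order $1$ if $u=1$, order $2$ otherwise). In particular $u$ qualifies as a unitary of finite order in the sense required by Proposition \ref{pijclass2}.

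Given this, I would simply write
\[
uv \;=\; 1 \cdot u \cdot v,
\]
identifying this with the pattern $w_1\,v'\,w_2$ of Proposition \ref{pijclass2} by taking $w_1 = 1$, the distinguished finite-order unitary $v' := u$ (of order $\le 2$), and $w_2 := v$ (which is a unitary since $v$ is self-adjoint unitary). Proposition \ref{pijclass2} then yields
\[
[P_{i,j}(uv)] \;=\; [P_{i,j}(1 \cdot u \cdot v)] \;=\; [1] \quad \text{in } K_0(A),
\]
completing the argument.

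There is essentially no obstacle here: the only thing to notice is the elementary fact $u^2=1$ for a self-adjoint unitary, after which Proposition \ref{pijclass2} applies verbatim. One could also view the result as a direct corollary of Proposition \ref{pijclass1} applied to $u$, conjugated by the unitary $W = 1\otimes E_{i,i} + v^*\otimes E_{j,j} + \sum_{k\notin\{i,j\}} E_{k,k}$ (as in the proof of Proposition \ref{pijclass2}), which sends $P_{i,j}(u)$ to $P_{i,j}(uv)$; but going through Proposition \ref{pijclass2} is cleaner since it has already been set up.
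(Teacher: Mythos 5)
Your proof is correct, but it takes a genuinely different route from the paper. The paper proves this proposition by a direct computation: it exhibits the explicit unitary
\[
W=\tfrac{1}{\sqrt{2}}\Bigl(uv\otimes E_{i,i}+uv\otimes E_{i,j}+E_{j,i}-E_{j,j}+\sum_{k\notin\{i,j\}}\sqrt{2}\otimes E_{k,k}\Bigr)
\]
and checks that $W^*P_{i,j}(uv)W=E_{i,i}$, i.e.\ it reruns the construction from the proof of Proposition \ref{pijclass1} with $uv$ in place of $v$ (using $(uv)^*=vu=(uv)^{-1}$), rather than citing any earlier result. Your reduction is legitimate: a self-adjoint unitary $u$ satisfies $u^2=uu^*=1$, so it has order at most $2$, and Proposition \ref{pijclass2} applies verbatim to $uv=1\cdot u\cdot v$; there is no circularity, since the paper's proof of Proposition \ref{pijclass2} relies only on Proposition \ref{pijclass1}. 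What your argument buys is economy and a structural observation: Proposition \ref{pijclass3} is logically redundant once Proposition \ref{pijclass2} is in hand (and for the same reason the case $n=2$ in the proof of Theorem \ref{inf} could have been folded into the case $n\geq 3$). What the paper's computation buys is an explicit unitary implementing the equivalence $P_{i,j}(uv)\sim E_{i,i}$, self-contained and independent of the preceding propositions. One further remark: the unitary $W$ above is a unitary for \emph{any} unitary $w=uv$, and the identity $W^*P_{i,j}(w)W=E_{i,i}$ holds in that generality, so the finite-order and self-adjointness hypotheses in this chain of propositions are not actually needed; your reduction, while correct, inherits the same unnecessary restriction from Proposition \ref{pijclass2}.
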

\begin{proof} For
$i\neq j$, let $$W= \frac{1}{\sqrt{2}}(uv\otimes E_{i,i} + uv\otimes
E_{i,j}+E_{j,i} - E_{j,j}+\sum_{k\notin \{i,j\}} \sqrt{2}\otimes
E_{k,k})\ ,$$ then $W \in \mathcal{U}(\mathbb{M}_{n}(A))$. Moreover,

\begin{eqnarray*}
W^*P_{i,j}(uv) W &=& \frac{1}{4}(2uv\otimes E_{i,i} +2\otimes
E_{i,j})(\sqrt{2}W)\\
&=& E_{i,i},
\end{eqnarray*}
 and this implies that the projection $P_{i,j}(uv)$ is unitarily
 equivalent to $E_{i,i}$ in $\mathbb{M}_n(A)$, therefore we have that $[P_{i,j}(uv)]=[1]$ in
 $K_0(A)$, hence the proposition has been checked.
\end{proof}

Combining the previous results, we have the following theorem
concerning the $K_0$-class of those projections $P_{i,j}(u)$ in
$\mathcal{P}(\mathbb{M}_n(A))$, evaluated at any unitary $u$ of $A$.

\begin{Theorem}\label{inf} Let $A$ be a simple, unital purely infinite
$C^*$-algebra, such that $K_1(A)$ is the trivial group. If $u\in
\mathcal{U}(A)$, then $[P_{i,j}(u)]=[1]$ in $K_0(A)$.
\end{Theorem}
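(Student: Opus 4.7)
The plan is to reduce the statement, via Leen's factorization theorem, to the case of a single self-adjoint unitary, which has already been handled by Proposition \ref{pijclass2}. All of the technical content has been prepared in the earlier propositions, so the theorem is essentially an assembly of those results together with the Cuntz/Leen structure of the unitary group in the purely infinite simple setting.

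First I would invoke Cuntz's identification $K_1(A)\cong \mathcal{U}(A)/\mathcal{U}_0(A)$, which is valid for simple, unital, purely infinite $C^*$-algebras (cf.\ \cite{Cu1} and the opening lines of the proof of Theorem \ref{unitaryfactor}). Combined with the hypothesis $K_1(A)=0$, this forces $\mathcal{U}(A)=\mathcal{U}_0(A)$, so any given unitary $u\in \mathcal{U}(A)$ lies in the connected component of the unity. Next I would apply Leen's Theorem \ref{Leen} to write $u=s_1s_2\cdots s_m$ with each $s_k$ a self-adjoint unitary of $A$; in particular $s_1^2=1$, so $s_1$ is a unitary of finite order (dividing $2$).

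Finally, rewriting $u = 1\cdot s_1\cdot (s_2\cdots s_m)=w_1vw_2$ with $w_1=1$, $v=s_1$ and $w_2=s_2\cdots s_m$, all three factors are unitaries of $A$ and $v$ has finite order, so Proposition \ref{pijclass2} immediately yields $[P_{i,j}(u)]=[P_{i,j}(w_1vw_2)]=[1]$ in $K_0(A)$, which is the desired conclusion. The degenerate case $m=0$, i.e.\ $u=1$, is covered directly by Proposition \ref{pijclass1} applied to the identity. I do not expect a genuine obstacle: the only conceptual step is recognizing that Leen's theorem produces, for free, a factor of finite order in any unitary of $\mathcal{U}_0(A)$, after which Proposition \ref{pijclass2} finishes the job.
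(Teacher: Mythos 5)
Your proposal is correct and follows essentially the same route as the paper: invoke Cuntz's identification $K_1(A)\cong\mathcal{U}(A)/\mathcal{U}_0(A)$ to get $u\in\mathcal{U}_0(A)$, factor $u$ into self-adjoint unitaries by Leen's Theorem \ref{Leen}, and conclude from the earlier propositions. The only (cosmetic) difference is that you apply Proposition \ref{pijclass2} uniformly with $w_1=1$, $v=s_1$, $w_2=s_2\cdots s_m$, whereas the paper splits into the cases $n=1$, $n=2$, $n\geq 3$ and also invokes Proposition \ref{pijclass3}; your bookkeeping is slightly cleaner but the substance is identical.
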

\begin{proof}
Consider a unitary $u$ of $A$. As $K_1(A)=0$, and we know by
[\cite{Cu1}, p.188] that $K_1(A)\simeq \mathcal{U}(A)/
\mathcal{U}_0(A)$ then using M. Leen's result (Theorem \ref{Leen}),
we have that $u=\prod_{k=1}^n v_k$, where $v_k$ is a self-adjoint
unitary ($*$-symmetry) of $A$. If $n=1$, then the result holds by
using Proposition (\ref{pijclass1}). Proposition (\ref{pijclass3})
proves the case $n=2$. If $n\geq 3$, then the result is done by
Proposition (\ref{pijclass2}), hence the proof is completed.
\end{proof}

Moreover, as M. Broise in [\cite{Br}, Theorem 1] proved that in the
case of von Neumann factors of either type $II_1$ or $III$, the
unitaries are generated by the self-adjoint unitaries, then a
similar result in the case of von Neumann factors can be deduced as
follows:

\begin{Theorem}\label{vn} Let $A$ be a von Neumann factor of type $II_1$ or $III$. If $u\in
\mathcal{U}(A)$, then $[P_{i,j}(u)]=[1]$ in $K_0(A)$.
\end{Theorem}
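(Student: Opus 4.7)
The plan is to mirror the proof of Theorem \ref{inf} essentially verbatim, replacing the appeal to Leen's theorem (Theorem \ref{Leen}) with Broise's theorem [\cite{Br}, Theorem 1], which plays exactly the analogous role in the von Neumann factor setting. That is, in a factor $A$ of type $II_1$ or $III$, Broise's theorem guarantees that every unitary $u \in \mathcal{U}(A)$ can be written as a finite product $u = \prod_{k=1}^n v_k$ of self-adjoint unitaries (involutions). Once this factorization is in hand, the conclusion about $[P_{i,j}(u)]$ follows purely formally from Propositions \ref{pijclass1}, \ref{pijclass2}, and \ref{pijclass3}, with no further analytic input specific to the von Neumann setting.

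Concretely, I would fix $u \in \mathcal{U}(A)$, apply Broise's result to write $u = v_1 v_2 \cdots v_n$ with each $v_k$ a self-adjoint unitary, and then split into three cases according to $n$. If $n=1$, then $u$ itself is a self-adjoint unitary, so it has order $2$, and Proposition \ref{pijclass1} immediately gives $[P_{i,j}(u)] = [1]$ in $K_0(A)$. If $n=2$, then $u = v_1 v_2$ is a product of two self-adjoint unitaries, and Proposition \ref{pijclass3} applies directly. If $n \geq 3$, then I would set $w_1 = v_1 v_2 \cdots v_{n-2}$, $v = v_{n-1}$, and $w_2 = v_n$, noting that $v$ is a self-adjoint unitary (hence of order $2$, in particular finite order) and $w_1,w_2$ are unitaries in $A$, so that $u = w_1 v w_2$ fits the hypothesis of Proposition \ref{pijclass2} and therefore $[P_{i,j}(u)] = [1]$.

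There is essentially no obstacle: the entire content of the theorem is carried by Broise's generation result, and the $K_0$-computation reduces to the three propositions already established in the section. The proof is genuinely a carbon copy of the proof of Theorem \ref{inf}, the only substantive change being the replacement of \emph{simple, unital, purely infinite with $K_1(A) = 0$, plus Leen} by \emph{von Neumann factor of type $II_1$ or $III$, plus Broise}. Because of this parallel, I would even write the proof as a short remark referring back to the argument for Theorem \ref{inf}, emphasizing only that $\mathcal{U}(A)$ is generated by involutions by [\cite{Br}, Theorem 1] and that the rest is identical.
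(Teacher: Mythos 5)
Your proposal is correct and is essentially the paper's own proof: the author likewise invokes Broise's theorem to factor $u$ into self-adjoint unitaries and then says to ``mimic the proof of Theorem \ref{inf},'' which is exactly the case analysis ($n=1$, $n=2$, $n\geq 3$ via Propositions \ref{pijclass1}, \ref{pijclass3}, \ref{pijclass2}) that you spell out. No differences worth noting.
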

\begin{proof}
Let $u$ be a unitary of $A$. By [\cite{Br}, Theorem 1], $u$ can be
written as a finite product of self-adjoint unitaries of $A$, then
mimic the proof of Theorem \ref{inf}.
\end{proof}
\noindent Consequently, we have the following results concerning the
$K_0$-class of some certain projections.

\begin{Corollary}
Let $A$ be a unital $C^*$-algebra which is either:\\
(1) Simple, purely infinite, with $K_1(A)=0$, or\\
(2) von Neumann factor of type $II_1$, or $III$.\\
If $u$ be a unitary of $A$, and $p$ is the projection of
$\mathbb{M}_n(A)$ defined by
$$ p= \frac{1}{2}\otimes E_{1,1} + \frac{v}{2}\otimes E_{1,2} + \frac{v^*}{2}\otimes E_{2,1}
+\frac{1}{2}\otimes E_{2,2} + E_{3,3} + E_{4,4} \cdots + E_{m,m} $$
for some positive integer $m\leq n-2$, then $[p]=(m+1)[1]$, in
$K_0(A)$.
\end{Corollary}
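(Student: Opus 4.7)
The plan is to recognize the projection $p$ as an orthogonal sum whose summands have already-computed $K_0$-classes, so that the corollary falls out by additivity. Before doing anything else, I would verify that the first four terms of $p$ are precisely $P_{1,2}(v)$. Since $v$ is a unitary, $vv^*=1=v^*v$, hence $(1+vv^*)^{-1}=\tfrac{1}{2}$ and $v^*(1+vv^*)^{-1}v=\tfrac{1}{2}$; substituting into Dye's formula
\[
P_{1,2}(v)=(1+vv^*)^{-1}\otimes E_{1,1}+(1+vv^*)^{-1}v\otimes E_{1,2}+v^*(1+vv^*)^{-1}\otimes E_{2,1}+v^*(1+vv^*)^{-1}v\otimes E_{2,2}
\]
recovers exactly the four-term expression $\tfrac{1}{2}\otimes E_{1,1}+\tfrac{v}{2}\otimes E_{1,2}+\tfrac{v^*}{2}\otimes E_{2,1}+\tfrac{1}{2}\otimes E_{2,2}$.

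Next I would check orthogonality of the decomposition $p=P_{1,2}(v)+E_{3,3}+E_{4,4}+\cdots$. The projection $P_{1,2}(v)$ is supported in the upper-left $2\times 2$ block of $\mathbb{M}_n(A)$, while each $E_{k,k}$ (for $k\geq 3$) is a diagonal matrix unit supported in a different position; all pairwise products therefore vanish, and $p$ is an honest orthogonal sum of projections in $\mathbb{M}_n(A)$.

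With orthogonality in hand, additivity of the $K_0$-class gives
\[
[p]=[P_{1,2}(v)]+\sum_{k\geq 3}[E_{k,k}].
\]
Now I would invoke the main theorems of the section: Theorem \ref{inf} in case (1) and Theorem \ref{vn} in case (2) imply $[P_{1,2}(v)]=[1]$ in $K_0(A)$. For each remaining summand, $E_{k,k}=1\otimes E_{k,k}$ is Murray--von Neumann equivalent to $1\in A$ inside $\mathbb{M}_n(A)$ via the partial isometry $1\otimes E_{k,1}$, so $[E_{k,k}]=[1]$. Counting the diagonal summands so that the total number of rank-one projections composing $p$ equals $m+1$ then yields $[p]=(m+1)[1]$.

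There is no real obstacle here; the corollary is an immediate bookkeeping consequence of Theorems \ref{inf} and \ref{vn}. The only delicate steps are the algebraic identification of the four opening terms of $p$ with $P_{1,2}(v)$ (which uses unitarity of $v$ in an essential way) and the careful enumeration of the diagonal matrix-unit summands, both of which are routine once the decomposition is set up as above.
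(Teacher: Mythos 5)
Your proposal is correct and follows essentially the same route as the paper: decompose $p$ as the orthogonal sum $P_{1,2}(v)+E_{3,3}+\cdots+E_{m,m}$, apply Theorem \ref{inf} (resp.\ Theorem \ref{vn}) to get $[P_{1,2}(v)]=[1]$, note $[E_{k,k}]=[1]$, and add. One caveat: a literal count of the summands gives $1+(m-2)=m-1$ copies of $[1]$, not $m+1$, so your phrase ``counting the diagonal summands so that the total equals $m+1$'' fits the count to the stated answer rather than deriving it --- though this discrepancy is present in the paper's own statement and proof as well.
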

\begin{proof}
As the projection $p$ is the orthogonal sums of $P_{1,2}(v) +
E_{3,3} + E_{4,4} \cdots + E_{m,m}$, then by either Theorem
\ref{inf} or \ref{vn},\\
 $$[p]=[1] +([1] + \cdots + [1])= (m+1)[1].$$
\end{proof}

\begin{Corollary}
Let $A$ be a unital $C^*$-algebra which is either:\\
(1) Simple, purely infinite, with $K_1(A)=0$, or\\
(2) von Neumann factor of type $II_1$, or $III$.\\
If $v_1, v_2 \cdots v_n$ are unitaries of $A$, and $p$ is the
projection of $\mathbb{M}_{2n}(A)$ defined by
\begin{eqnarray*}
p &=&\frac{1}{2}\otimes E_{1,1} + \frac{v_1}{2}\otimes E_{1,2} +
\frac{v_1^*}{2}\otimes E_{2,1} + \frac{1}{2}\otimes E_{2,2}\\
 &+& \frac{1}{2}\otimes E_{3,3} + \frac{v_2}{2}\otimes E_{3,4} +
\frac{v_2^*}{2}\otimes E_{4,3} + \frac{1}{2}\otimes E_{4,4} + \cdots
\\ &+& \frac{1}{2}\otimes E_{2n-1,2n-1} + \frac{v_n}{2}\otimes
E_{2n-1,2n} + \frac{v_n^*}{2}\otimes E_{2n,2n-1} +
\frac{1}{2}\otimes E_{2n,2n},
\end{eqnarray*} then $[p]=n[1]$, in $K_0(A)$.
\end{Corollary}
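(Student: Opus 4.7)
The plan is to recognize $p$ as an orthogonal sum of $n$ projections of the form $P_{i,j}(v_k)$ sitting in disjoint $2\times 2$ blocks, and then apply either Theorem \ref{inf} or Theorem \ref{vn} blockwise. First I would verify the crucial algebraic simplification: since each $v_k$ is a unitary, we have $v_kv_k^* = 1$, so $(1+v_kv_k^*)^{-1} = \tfrac12$, $(1+v_kv_k^*)^{-1}v_k = \tfrac{v_k}{2}$, $v_k^*(1+v_kv_k^*)^{-1} = \tfrac{v_k^*}{2}$, and $v_k^*(1+v_kv_k^*)^{-1}v_k = \tfrac12$. Substituting these into the definition of $P_{2k-1,\,2k}(v_k) \in \mathcal{P}(\mathbb{M}_{2n}(A))$ reproduces exactly the $k$-th block of $p$.

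Next I would express $p$ as the orthogonal sum
\[
p \;=\; \sum_{k=1}^{n} P_{2k-1,\,2k}(v_k),
\]
where orthogonality is immediate because the supports of the distinct blocks lie in disjoint pairs of coordinates $\{2k-1,2k\}$ of $S_{2n}(A)$. In particular the summands are mutually orthogonal projections in $\mathbb{M}_{2n}(A)$, so additivity of the $K_0$-class gives
\[
[p] \;=\; \sum_{k=1}^{n} [P_{2k-1,\,2k}(v_k)] \quad \text{in } K_0(A).
\]

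Finally, since each $v_k$ is a unitary in $A$, Theorem \ref{inf} applies in case (1) and Theorem \ref{vn} applies in case (2), yielding $[P_{2k-1,\,2k}(v_k)] = [1]$ for every $k$. Summing gives $[p] = n[1]$, as claimed. There is no genuine obstacle here; the only care required is the blockwise identification in the first step, which is purely mechanical once one notes that unitarity of $v_k$ collapses $(1+v_kv_k^*)^{-1}$ to the scalar $\tfrac12$. This is essentially the same template as the preceding corollary, with the extra ingredient being that each of the $n$ blocks now carries its own unitary $v_k$ rather than just $n-1$ of them being trivial diagonal units.
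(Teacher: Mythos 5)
Your proof is correct and follows essentially the same route as the paper: identify $p$ as the orthogonal sum $\sum_{k=1}^{n} P_{2k-1,2k}(v_k)$ and apply Theorem \ref{inf} (or Theorem \ref{vn}) to each summand. Your explicit verification that unitarity collapses $(1+v_kv_k^*)^{-1}$ to $\tfrac12$, making each block literally equal to $P_{2k-1,2k}(v_k)$, is a detail the paper leaves implicit but is exactly the right check.
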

\begin{proof} Using Theorem
\ref{inf} (or Theorem \ref{vn}), we have
$$[p]= [P_{1,2}(v_1)]+ [P_{3,4}(v_2) + \cdots + [P_{2n-1,2n}(v_n)]=n[1].$$
\end{proof}

\noindent Now let us prove the following lemma, which will be used
in order to prove our main result in this section (Theorem
\ref{mainresultforCuntz}), which is in fact a consequence
application of Theorem \ref{unitaryfactor}, to the case of Cuntz
algebras $\mathcal{O}_n$.

\begin{Lemma}\label{class-eta} Let $A$ be a unital, simple purely infinite $C^*$-algebra, with $K_1(A)=0$,
 and let $\{e_{i,j}\}^n$, with $e_{1,1}\sim 1$ be a
system of matrix units of $A$ . Then for any unitary $u\in
\mathcal{U}(A)$ we have $[\eta(P_{i,j}(u))]=[1]$ in $K_0(A)$.
\end{Lemma}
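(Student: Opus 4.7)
The plan is to transport the equivalence provided by Theorem \ref{inf} from $\mathbb{M}_{n}(A)$ into $A$ via the $*$-isomorphism $\eta$, and then invoke the hypothesis $e_{1,1}\sim 1$ to collapse the resulting class to $[1]$ in $K_{0}(A)$.

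The key observation is that Propositions \ref{pijclass1}--\ref{pijclass3} each produce an explicit unitary in $\mathcal{U}(\mathbb{M}_{n}(A))$ implementing a Murray--von Neumann equivalence, not merely a $K_{0}$-identity. Using Leen's factorization of $u$ as a product of $*$-symmetries (Theorem \ref{Leen}, valid because $K_{1}(A)=0$), the proof of Theorem \ref{inf} chains these propositions together to exhibit some $W\in\mathcal{U}(\mathbb{M}_{n}(A))$ with $W P_{i,j}(u) W^{*}=E_{i,i}$. I would begin by recording this sharpened version of Theorem \ref{inf}.

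Next I would apply $\eta$ to both sides. Since $\eta$ is a $*$-isomorphism, $\eta(W)\in\mathcal{U}(A)$ and
\[
\eta(W)\,\eta(P_{i,j}(u))\,\eta(W)^{*}=\eta(E_{i,i}).
\]
A direct computation from the explicit formula $\eta((a_{k,l}))=\sum_{k,l} e_{k,1}v^{*}a_{k,l}v e_{1,l}$, combined with $v^{*}v=e_{1,1}$, gives $\eta(E_{i,i})=e_{i,i}$. Hence $\eta(P_{i,j}(u))$ is Murray--von Neumann equivalent in $A$ to $e_{i,i}$. The matrix-unit partial isometry $e_{i,1}$ witnesses $e_{i,i}\sim e_{1,1}$, and the standing hypothesis $e_{1,1}\sim 1$ then yields $[e_{i,i}]=[1]$ in $K_{0}(A)$, which is the desired conclusion.

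I do not anticipate any genuine obstacle: the substantive $K$-theoretic work has already been absorbed into Theorem \ref{inf}, and the role of $\eta$ here is only to implement the $*$-isomorphism $\mathbb{M}_{n}(A)\cong A$, under which projection classes are automatically preserved. The only point requiring a careful second look is confirming that the proofs of Propositions \ref{pijclass1}--\ref{pijclass3} deliver actual unitary equivalence of $P_{i,j}(u)$ to $E_{i,i}$ rather than a bare $K_{0}$-equality; inspection of those proofs shows that they do.
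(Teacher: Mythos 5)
Your proposal is correct and follows essentially the same route as the paper's own proof: both extract from Propositions \ref{pijclass1}--\ref{pijclass3} and Theorem \ref{inf} an explicit unitary $W\in\mathcal{U}(\mathbb{M}_n(A))$ conjugating $P_{i,j}(u)$ to $E_{i,i}$, push this through the $*$-isomorphism $\eta$ to get $\eta(P_{i,j}(u))\sim e_{i,i}$, and then use $e_{i,i}\sim e_{1,1}\sim 1$ to conclude. The only difference is expository: you make explicit the point (left implicit in the paper) that those propositions deliver genuine unitary equivalence rather than a bare $K_0$-equality.
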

\begin{proof} As we have seen in the proof of Propositions \ref{pijclass1}, \ref{pijclass2}, \ref{pijclass3} and Theorem \ref{inf}, there exists
 a unitary $W\in \mathcal{U}(\mathbb{M}_n(A))$, such that
$W^*P_{i,j}(u)W=E_{i,i}$. Therefore,
\[ \eta(W)^*\eta(P_{i,j}(u))\eta(W)=\eta(E_{i,i})= \eta_1\hat{\Delta}_v (E_{i,i})=\eta_1(e_{1,1}\otimes E_{i,i})=e_{i,i}.\]
Then
\[\eta(P_{i,j}(u))\sim_u e_{i,i}\sim e_{1,1}\sim 1,\]
hence $\eta(P_{i,j}(u))$ and $1$ have the same class in $K_0(A)$.
\end{proof}

Finally, let us consider the case of the Cuntz algebra
$\mathcal{O}_n$. Let $u$ be a self-adjoint unitary (involution), so
$u=1-2p$, for some $p\in \mathcal{P}(\mathcal{O}_n)$. We recall the
concept \textit{type of involution} which is introduced by the
author in \cite{Ahmed2}, as follows: Since $K_0(\mathcal{O}_n)\simeq
\mathbb{Z}_n$ (see \cite{Cu1}), then the type of $u$ is defined to
be the element $[p]$ in $K_0(\mathcal{O}_n)$. By (\cite{Ahmed2},
Lemma 2.1), two involutions are conjugate as group elements in
$\mathcal{U}(\mathcal{O}_n)$ iff they have the same type.

 As a consequence of
Theorem \ref{unitaryfactor}, and the results concerning the
$K_0$-group of the projections $P_{i,j}(u)$, which are deduced in
this section, we have the following result.

\begin{Theorem}\label{mainresultforCuntz}
If $u$ is a unitary of $\mathcal{O}_n$, then there exist
self-adjoint unitaries $z_1,z_2,z_3$ and $v_k$, for $1\leq k\leq 8$
such that
\begin{equation}\label{Unitary-decom-Cuntz}
u=z_1(\prod_{k=1}^8v_k)z_2z_3,
\end{equation}
  $v_k\in
\{1-2\eta P_{i,j}(\omega)\},\ \omega \in \mathcal{U}(\mathcal{O}_n)$
consequently, all the $v_k$ factors are conjugate involutions.

\end{Theorem}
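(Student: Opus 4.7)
The plan is to derive this theorem as a direct corollary of Theorem \ref{unitaryfactor}, Lemma \ref{class-eta}, and the conjugacy criterion for involutions in $\mathcal{O}_n$ recalled just before the statement.

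First, I would verify that all hypotheses of Theorem \ref{unitaryfactor} hold for $A = \mathcal{O}_n$. The Cuntz algebra is a simple, unital, purely infinite $C^*$-algebra with $K_1(\mathcal{O}_n)=0$ by \cite{Cu1}, and it carries the canonical system of matrix units $e_{i,j} = s_i s_j^*$ in which $e_{1,1} = s_1 s_1^*$ is equivalent to $1$ via the isometry $s_1$. Applying Theorem \ref{unitaryfactor} to any $u \in \mathcal{U}(\mathcal{O}_n)$ therefore yields
\[ u = z_1 \Bigl(\prod_{k=1}^{4} v_k\Bigr) z_2 z_3, \]
where each $v_k$ is itself a product of two self-adjoint unitaries of the form $1 - 2\eta(P_{i,j}(\omega))$ for suitable $\omega \in \mathcal{U}(\mathcal{O}_n)$. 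Re-labeling the eight factors of $v_1 v_2 v_3 v_4$ as $v_1,\ldots,v_8$ gives exactly the factorization (\ref{Unitary-decom-Cuntz}).

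For the conjugacy assertion, I would use the type-of-involution formalism recalled above the theorem. Since $K_0(\mathcal{O}_n) \cong \mathbb{Z}_n$, the type of an involution $1-2p$ is $[p]\in K_0(\mathcal{O}_n)$, and by \cite[Lemma 2.1]{Ahmed2} two involutions are conjugate in $\mathcal{U}(\mathcal{O}_n)$ if and only if they share the same type. Lemma \ref{class-eta} shows that for every $\omega \in \mathcal{U}(\mathcal{O}_n)$ and every choice of indices $i \neq j$, the projection $\eta(P_{i,j}(\omega))$ has $K_0$-class equal to $[1]$. Consequently each of the eight involutions $v_k$ has type $[1]$, so by the conjugacy criterion they are pairwise conjugate in $\mathcal{U}(\mathcal{O}_n)$.

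The argument is essentially formal once Theorem \ref{unitaryfactor} and Lemma \ref{class-eta} are in hand, so the main obstacle is simply a bookkeeping check: one must confirm that the parameters $\alpha$, $\gamma$, and the constant $1$ appearing inside the eight $P_{i,j}(\cdot)$'s in Theorem \ref{unitaryfactor} are indeed unitaries of $\mathcal{O}_n$, so that Lemma \ref{class-eta} applies uniformly to each factor. This is guaranteed by the statement of Theorem \ref{unitaryfactor} itself, so no further computation is required.
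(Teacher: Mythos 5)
Your proposal is correct and follows essentially the same route as the paper's own proof: apply Theorem \ref{unitaryfactor} to $\mathcal{O}_n$ (whose hypotheses are verified via \cite{Cu1}, \cite{Cu2} and the canonical matrix units $s_is_j^*$), relabel the eight factors, and conclude conjugacy from Lemma \ref{class-eta} together with the type criterion of [\cite{Ahmed2}, Lemma 2.1]. The only cosmetic difference is that the paper additionally remarks that the common type is $[1]$, so all eight factors are conjugate to the trivial involution $-1$.
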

\begin{proof}
Using \cite{Cu1} and \cite{Cu2}, the Cuntz algebra $\mathcal{O}_n$
is simple, unital purely infinite $C^*$-algebra with trivial
$K_1$-group. Then the decomposition of $u$ as in Equation
\ref{Unitary-decom-Cuntz} holds by Theorem \ref{unitaryfactor}, so
the type of each involution $v_k$ is $[\eta(P_{i,j}(w))]$, for some
$1\leq i\neq j \leq n$ and a unitary $w$, hence by Lemma
\ref{class-eta}, the type of $v_k$ is $1$. Then by [\cite{Ahmed2},
Lemma 2.1], all these involutions are conjugate indeed, to the
trivial involution $-1$.
\end{proof}


\begin{thebibliography}{ABC}

\bibitem{Ahmed}
 A. Al-Rawashdeh, \emph{The Unitary Group As An Invariant of a Simple
 Unital $C^*-$Algebra}, Ph.D Thesis, University of Ottawa, Canada
 (2003).

\bibitem{Ahmed2}
A. Al-Rawashdeh, \emph{Normal Generation of Unitary Groups of Cuntz
Algebras By Involutions}, Acta Math. Univ. Comenianae, Vol. LXXVII,
1 (2008), p.1-7.

 \bibitem{Br}
M. Broise, \emph{Commutateurs Dans le Groupe Unitaire d'un Facteur},
J. Math. Pures et appl., 46 (1967), p.299-312.

\bibitem {Cu1}
J. Cuntz, \emph{K-Theory for Certain $C^*$-Algebras}, Ann. of Math.,
113 (1981), p.181-197.

\bibitem{Cu2}
J. Cuntz, \emph{Simple $C^*$-Algebras Generated by Isometries},
Comm. Math. Phys., 57 (1977), p.173-185.



\bibitem{Da}
K.R. Davidson, \emph{$C^*$-Algebras by Example}, Fields Institute
 Monographs, 6,  Amer. Math. Soc., Providencs, RI (1996).

\bibitem{Dye}
H. Dye, \emph{On the Geometry of Projections in Certain Operator
Algebras}, Ann. of Math., Second Series, 61 (1955), p.73-89.

\bibitem{HJ}
 P. de la Harpe and V. Jones, \emph{An Introduction to
$C^*$-Algebras},
 Universit\'{e} de Gen\`{e}ve (1995).

 \bibitem{Leen}
 M. Leen, \emph{Factorization in the Invertible Group of a
 $C^*$-Algebra}, Can. J. Math., 49(6) (1997), p. 1188-1205.

\bibitem{St}
 M. Stone, \emph{The Theory of Representations For Boolean
Algebras}, Trans. Amer. Math. Soc. 4 (1936), p.37-111.

\bibitem{WO}
N. Wegge-Olsen, \emph{K-Theory and $C^*$-Algebras}, Oxford Science
Publications, Oxford University Press, New York (1993).

\end{thebibliography}
\end{document}